\documentclass[12pts]{article}
\usepackage{amsmath,amsfonts,amssymb,amsthm,mathtools}
\usepackage{microtype}
\usepackage{lipsum}
\usepackage{hyperref}
\usepackage{indentfirst}
\hypersetup{colorlinks, linkcolor={red}, citecolor={blue}, urlcolor={black}}
\usepackage{comment}
\usepackage[margin=1.25in]{geometry}
\usepackage{enumerate}

\allowdisplaybreaks

\newcommand\ceil[1]{\left\lceil#1\right\rceil}
\newcommand\floor[1]{\left\lfloor#1\right\rfloor}

\newtheorem{theorem}{Theorem}[section]
\newtheorem{lemma}[theorem]{Lemma}

\newtheorem{corollary}[theorem]{Corollary}
\newtheorem{definition}[theorem]{Definition}

\newtheorem{question}[theorem]{Question}

\title{Distribution of colours in rainbow $H$-free colourings}
\author{Zhuo Wu\thanks{Mathematics Institute and DIMAP, University of Warwick, UK. Email: {\tt Zhuo.Wu@warwick.ac.uk}. Supported by the Warwick Mathematics Institute Centre for Doctoral Training and funding from University of Warwick.}
\and Jun Yan\thanks{Mathematics Institute, University of Warwick, UK. Email: {\tt jun.yan@warwick.ac.uk}. Supported by the Warwick Mathematics Institute Centre for Doctoral Training and funding from the UK EPSRC (Grant number: EP/W523793/1).}}
\date{}

\begin{document}

\maketitle

\begin{abstract}
An edge colouring of $K_n$ with $k$ colours is a \emph{Gallai $k$-colouring} if it does not contain any rainbow triangle. Gy\'arf\'as, P\'alv\"olgyi, Patk\'os and Wales proved that there exists a number $g(k)$ such that $n\geq g(k)$ if and only if for any colour distribution sequence $(e_1,\cdots,e_k)$ with $\sum_{i=1}^ke_i=\binom{n}{2}$, there exist a Gallai $k$-colouring of $K_n$ with $e_i$ edges having colour $i$. They also showed that $\Omega(k)=g(k)=O(k^2)$ and posed the problem of determining the exact order of magnitude of $g(k)$. Feffer, Fu and Yan improved both bounds significantly by proving $\Omega(k^{1.5}/\log k)=g(k)=O(k^{1.5})$. We resolve this problem by showing $g(k)=\Theta(k^{1.5}/(\log k)^{0.5})$.

Moreover, we generalise these definitions by considering rainbow $H$-free colourings of $K_n$ for any general graph $H$, and the natural corresponding quantity $g(H,k)$. We prove that $g(H,k)$ is finite for every $k$ if and only if $H$ is not a forest, and determine the order of $g(H,k)$ when $H$ contains a subgraph with minimum degree at least 3.
\end{abstract}

\section{Introduction}
An edge colouring of $K_n$ using $k$ colours is a map $c: E(K_n)\to [k]$, where the colour of an edge $uv$ is $c(uv)$. Given a simple graph $H$ and an edge colouring of $K_n$, we say that this colouring contains a \emph{rainbow copy} of $H$ if there exists a subgraph of $K_n$ isomorphic to $H$, whose edges all have different colours. Otherwise, we say this colouring is \emph{rainbow $H$-free}. 

For non-negative integers $e_1,\cdots,e_k$, we say the sequence $(e_1,\cdots,e_k)$ is \emph{$n$-good} iff $\sum_{i=1}^ke_i=\binom{n}{2}$. An edge colouring of the complete graph $K_n$ using $k$ colours is said to have \emph{colour distribution sequence} $(e_1,\cdots,e_k)$ if exactly $e_i$ edges have colour $i$ for every $i\in[k]$. In this case, we also say that this is an \emph{$(e_1,\cdots,e_k)$-colouring} of $K_n$.

In \cite{GS}, Gy\'arf\'as and Simonyi initiated the study of rainbow $K_3$-free colourings of $K_n$. They named these colourings Gallai colourings, referring to Gallai's related work \cite{G} on comparability graphs. Since then, many Ramsey-type results \cite{GSSS,GS,MS} and enumeration results \cite{BBH, BBMS, BL} for Gallai colourings have been obtained. 

A new avenue of research on the distribution of colours in Gallai colourings was proposed by Gy\'arf\'as, P\'alv\"olgyi, Patk\'os and Wales in \cite{GPPW}. They proved the following theorem, which says that as long as we have sufficiently many vertices, any colour distribution sequence can be realised as a Gallai colouring.

\begin{theorem}[\cite{GPPW}]
For every integer $k\geq 2$, there exists an integer $N$ such that for all $n\geq N$ and all non-negative integers $e_1,\cdots,e_k$ satisfying $\sum_{i=1}^ke_i=\binom{n}{2}$, there exists a Gallai colouring of $K_n$ with colour distribution sequence $(e_1,\cdots,e_k)$.
\end{theorem}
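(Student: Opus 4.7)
The plan is to proceed by induction on $k$. The base case $k = 2$ is immediate: every $2$-edge-colouring of $K_n$ is Gallai, so we realise $(e_1, e_2)$ by colouring any $e_1$ edges with colour $1$ and the remaining $e_2$ with colour $2$.

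For the inductive step, assume the statement for $k-1$ colours with some threshold $N_{k-1}$. By relabelling we may assume $e_k = \min_i e_i$, so $e_k \leq \binom{n}{2}/k$. The core construction is a \emph{twin blow-up}: pick a vertex $v \in V(K_n)$ and a subset $S \subseteq V(K_n) \setminus \{v\}$ of size $|S| = e_k$, and let $T = V(K_n) \setminus (S \cup \{v\})$. Colour every edge $vs$ with $s \in S$ by colour $k$ and require each $s \in S$ to be a ``twin'' of $v$, meaning $c(sw) = c(vw)$ for every $w \in T$. Colour the induced $K_{n - e_k}$ on $\{v\} \cup T$ using a Gallai $(k-1)$-colouring from the inductive hypothesis, and colour the $\binom{e_k}{2}$ edges inside $S$ using colours from $\{1, \ldots, k-1\}$ (either inductively when $e_k \geq N_{k-1}$, or using only colour $1$ when $e_k$ is small, in which case $\binom{e_k}{2}$ is a bounded constant depending only on $k$). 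A brief check over the possible triangle types, grouped by how their three vertices distribute among $\{v\}$, $S$, and $T$, verifies that the resulting colouring is Gallai.

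The colour counts are: exactly $e_k$ edges of colour $k$, and for $i < k$, exactly $E_i + e_k d_i + s_i$ edges of colour $i$, where $E_i$ is the colour-$i$ count in the inner $(k-1)$-colouring on $\{v\} \cup T$, $d_i$ is the colour-$i$ degree of $v$ there, and $s_i$ is the colour-$i$ count inside $S$. Setting each total equal to $e_i$ yields a linear system. The main obstacle is that the inductive hypothesis only controls $(E_i)$, not the degree sequence $(d_i)$ of a specified vertex in the inner colouring. I would overcome this by strengthening the inductive hypothesis to additionally allow prescribing a compatible degree sequence at some vertex; this strengthening is itself provable by another twin blow-up on a smaller Gallai $(k-1)$-colouring produced by the weaker hypothesis, since blowing up a single vertex into a sufficiently large twin class provides the freedom to realise any target degree sequence at the twin class while shifting the global distribution into the required shape. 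A short algebraic argument using $e_k \leq \binom{n}{2}/k$ then shows that non-negative solutions to the linear system exist once $N_k$ is chosen as a suitable polynomial in $N_{k-1}$ and $k$, completing the induction.
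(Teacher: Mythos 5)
Your construction has a fatal size obstruction at the very first step. You set $|S|=e_k$ and make the colour-$k$ class exactly the star $\{vs: s\in S\}$, so colour $k$ receives at most $n-1$ edges; moreover the inner complete graph $\{v\}\cup T$ has only $n-e_k$ vertices, so you also need $n-e_k\geq N_{k-1}$. But minimality of $e_k$ only gives $e_k\leq\binom{n}{2}/k$, which is $\Theta(n^2/k)$ and vastly exceeds $n-1$ once $n$ is large compared to $k$ --- and $n$ must be allowed to be arbitrarily large, since the theorem quantifies over all $n\geq N$. The perfectly balanced sequence $e_1=\cdots=e_k=\binom{n}{2}/k$ already defeats the construction: no colour class can be a star at a single vertex, so the induction cannot even start on such inputs. (The secondary issues --- prescribing the degree sequence $(d_i)$ at $v$ and solving the linear system --- are also left as sketches, but they are moot given this obstruction.) The rainbow-triangle verification itself is fine: twins of $v$ cannot create a rainbow triangle, and the base case $k=2$ is correct.

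The fix, which is essentially what the paper does (it deduces this theorem from the much stronger quantitative upper bound in Theorem~\ref{degen2} via Lemma~\ref{standardhfree}), is to let a single colour occupy \emph{all} edges between two large parts of a partition, not just a star: a ``standard colouring step'' splits $K_n$ into $K_t$ and $K_{n-t}$ and gives all $t(n-t)$ cross edges one colour, and iterating such steps recursively yields a rainbow-triangle-free (indeed rainbow-cycle-free) colouring in which each colour class is a union of complete bipartite graphs of flexible sizes, so classes of size $\Theta(n^2/k)$ can be realised. If you want to keep an induction on $k$, you would need to peel off colour $k$ using such bipartite cuts (possibly several of them) rather than a single vertex's star, and then the bookkeeping reduces to choosing the part sizes so that the residual sequence is $(n')$-good for the remaining $k-1$ colours --- which is exactly the cushion/accounting machinery of Section~\ref{upper}.
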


In \cite{GPPW}, Gy\'arf\'as, P\'alv\"olgyi, Patk\'os and Wales denoted the smallest such integer $N$ by $g(k)$. They also gave the estimate $2k-2\leq g(k)\leq 8k^2+1$, and posed the problem of determining the exact order of magnitude of $g(k)$. The gap was closed considerably when Feffer, Fu and Yan \cite{FFY} proved that $\Omega(k^{1.5}/\log k)\leq g(k)\leq O(k^{1.5})$. The first main result of this paper resolves this problem. 

\begin{theorem}\label{triangle}
$g(k)=\Theta(k^{1.5}/(\log k)^{0.5})$.
\end{theorem}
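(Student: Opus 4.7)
The plan is to establish $g(k) = O(k^{3/2}/\sqrt{\log k})$ and $g(k) = \Omega(k^{3/2}/\sqrt{\log k})$, improving both sides of the estimate $\Omega(k^{3/2}/\log k) \leq g(k) \leq O(k^{3/2})$ of \cite{FFY} by a factor $\sqrt{\log k}$.

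\textbf{Upper bound.} Given a decreasingly sorted $n$-good sequence $(e_1,\ldots,e_k)$ on $n = \ceil{C k^{3/2}/\sqrt{\log k}}$ vertices, I would treat colour~$1$ as a background and realise colours $2,\ldots,k$ as a carefully chosen subgraph of $K_n$. The construction of \cite{FFY} places each minority colour $i$ in a dedicated bipartite block on $\Theta(\sqrt{e_i})$ vertices, so the sum $\sum_{i \geq 2}\sqrt{e_i}$ drives their $O(k^{3/2})$ bound. I would save a $\sqrt{\log k}$ factor by hosting all minority colours inside a single triangle-free graph $H$ with $n$ vertices and $\Theta(n^{3/2}/\sqrt{\log n})$ edges, arranged so that its edge set decomposes into $\Omega(n\sqrt{\log n})$ connected components, each hosting one minority colour class. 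Because $H$ is triangle-free, any would-be rainbow triangle in $K_n$ must contain at least one background edge plus two $H$-edges sharing a vertex: the latter lie in the same connected component of $H$, hence receive the same colour, contradicting rainbow. The main work is showing that such a host $H$ exists (e.g.\ obtained from projective-plane incidence graphs) and that its components can be tuned or blown up to absorb the prescribed edge counts $e_i$.

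\textbf{Lower bound.} For a matching lower bound, I plan to exhibit an unrealisable good sequence when $n < c k^{3/2}/\sqrt{\log k}$. Consider the profile with $k-1$ minority colours each carrying exactly $m = \Theta(k/\log k)$ edges and colour~$1$ absorbing the remaining $\binom{n}{2} - (k-1)m$. In any hypothetical Gallai realisation, two adjacent edges of distinct minority colours force the third edge of their triangle to be a minority edge of one of those two colours; iterating produces a rigid ``join'' structure limiting how much different minority colours may overlap on vertices. Each minority class alone must span $\Omega(\sqrt{m})$ vertices to accommodate its $m$ edges, and a refined double counting --- sharpened via the extremal theory of triangle-free graphs in the spirit of Ajtai--Koml\'os--Szemer\'edi --- bounds the extent of vertex sharing across classes. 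Combining these constraints yields $n \geq \Omega(k\sqrt{m/\log m}) = \Omega(k^{3/2}/\sqrt{\log k})$.

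\textbf{Main obstacle.} I expect the key difficulty to lie in the upper bound construction: one must realise \emph{every} $n$-good distribution simultaneously, including profiles that mix a few enormous $e_i$ with many tiny ones. Reconciling the triangle-free-host construction with a Gallai-partition skeleton that handles the very large colour classes, while maintaining a total vertex budget of $O(k^{3/2}/\sqrt{\log k})$, likely requires an intricate hybrid construction together with a careful case analysis depending on the shape of the prescribed colour distribution.
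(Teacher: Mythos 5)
Both halves of your proposal have genuine gaps. For the upper bound, the plan to host all minority colours in a single triangle-free graph $H$ whose connected components are each monochromatic cannot work: connected components are vertex-disjoint, so if colour $i$ occupies components spanning $v_i$ vertices in total, triangle-freeness forces $e_i\leq v_i^2/4$, hence $v_i\geq 2\sqrt{e_i}$ and $n\geq\sum_{i\geq2}v_i\geq 2\sum_{i\geq 2}\sqrt{e_i}$. For the uniform distribution $e_i=\binom{n}{2}/k$ this gives $n\gtrsim n\sqrt{k}$, a contradiction, so this construction provably fails for the distributions that matter (it is essentially the same disjoint-block bottleneck that caps the construction of Feffer--Fu--Yan at $O(k^{1.5})$). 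The paper's upper bound instead uses nested Gallai partitions ("standard colouring steps"), in which a colour class is the complete bipartite graph between a split-off part and everything else at that stage of the recursion; these classes heavily share vertices, and the $\sqrt{\log k}$ saving comes from a two-level bookkeeping of "cushion" (a reservoir of $k$ small cliques plus a second reservoir built by a telescoping/integral estimate) that guarantees the recursion can always continue down to cliques of size $O(k)$.

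For the lower bound, your sketch is missing the key structural tool and its numerology is internally inconsistent. With $m=\Theta(k/\log k)$ you get $k\sqrt{m/\log m}=\Theta(k^{1.5}/\log k)$, which is only the old bound, not $\Omega(k^{1.5}/(\log k)^{0.5})$; to make the arithmetic work you need $m=\Theta(k)$. More seriously, "iterating the non-rainbow-triangle condition" and "refined double counting via Ajtai--Koml\'os--Szemer\'edi" is not an argument: the union of minority colour classes need not be triangle-free (a triangle coloured $i,i,j$ is permitted), so extremal triangle-free theory does not directly constrain it. The mechanism that actually closes the gap is the Gallai partition theorem in the quantitative form of Theorem~\ref{strongdecomp}: every Gallai colouring decomposes with at most two base colours, \emph{each used on at least $n-1$ cross edges}. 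Taking $\lfloor k/2\rfloor$ colours with exactly $\lfloor k/2\rfloor$ edges each makes them ineligible as base colours at every level of an iterated decomposition; an integral estimate then shows the remaining $\lceil k/2\rceil$ colours must cover about $\frac{n^2}{2}-\frac{2b^2}{3}-O(a\log\frac{n}{b})$ cross edges, exceeding their total budget of $\frac{n^2}{2}-b^2$ once $n<ck^{1.5}/(\log k)^{0.5}$. Without this decomposition theorem (or an equivalent rigidity statement), your lower bound does not get off the ground.
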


Motivated by this result, we study a generalised version of this problem. We first generalise the quantity $g(k)$ to $g(H,k)$ as follows. 

\begin{definition}
Let $H$ be a graph and let $k$ be a positive integer. Define $g(H,k)$ to be the smallest integer $N$, such that for all $n\geq N$ and all non-negative integers $e_1,\cdots,e_k$ satisfying $\sum_{i=1}^ke_i=\binom{n}{2}$, there exists a rainbow $H$-free colouring of $K_n$ with colour distribution sequence $(e_1,\cdots,e_k)$. If such $N$ does not exist, define $g(H,k)=\infty$.
\end{definition}

Note that setting $H=K_3$ recovers the definition of $g(k)$. Moreover, we have the following simple property of $g(H,k)$.

\begin{lemma}\label{subgraph}
If $H_1$ is a subgraph of $H_2$, then $g(H_1,k)\geq g(H_2,k)$.
\end{lemma}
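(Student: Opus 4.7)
The plan is to observe that the condition of being rainbow $H_1$-free is logically stronger than being rainbow $H_2$-free when $H_1$ is a subgraph of $H_2$. Specifically, if an edge colouring of $K_n$ contains a rainbow copy of $H_2$, then the edges realising this rainbow copy all carry distinct colours; restricting to the edges corresponding to a fixed embedding of $H_1$ into $H_2$ yields a subset of edges with pairwise distinct colours, forming a rainbow copy of $H_1$. Taking contrapositives, every rainbow $H_1$-free colouring is automatically rainbow $H_2$-free.

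With this observation in hand, the inequality is almost immediate. Fix any integer $n \geq g(H_1,k)$ and any $n$-good sequence $(e_1,\ldots,e_k)$. By the definition of $g(H_1,k)$, there exists a rainbow $H_1$-free $(e_1,\ldots,e_k)$-colouring of $K_n$. By the previous paragraph, this colouring is also rainbow $H_2$-free. Hence every $n \geq g(H_1,k)$ satisfies the defining condition of $g(H_2,k)$, so $g(H_2,k) \leq g(H_1,k)$, as required. (Note that if $g(H_1,k)=\infty$ the statement is vacuous.)

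There is essentially no technical obstacle here; the only care needed is with the definitions, in particular to confirm that a rainbow subgraph of a rainbow graph is rainbow, and to check that the inequality goes in the direction claimed, namely that a stronger forbidding condition yields a larger threshold.
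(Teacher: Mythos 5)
Your proof is correct and follows exactly the paper's approach: the key observation that every rainbow $H_1$-free colouring is also rainbow $H_2$-free (since a rainbow copy of $H_2$ would contain a rainbow copy of $H_1$), from which the inequality $g(H_2,k)\leq g(H_1,k)$ follows directly from the definition. The paper states this in one line; you have simply spelled out the same argument in full detail.
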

\begin{proof}
This follows from the observation that any rainbow $H_1$-free graph is also rainbow $H_2$-free.
\end{proof}

The natural questions of interest regarding $g(H,k)$ are the following.

\begin{question}
For what graphs $H$ and integers $k$ is $g(H,k)$ finite? If $g(H,k)$ is finite, What is the order of magnitude for $g(H,k)$ in terms of $k$?
\end{question}

It turns out that the behaviour of $g(H,k)$ depends a graph parameter called degeneracy. We recall this definition below. 

\begin{definition}
Let $H$ be a graph and let $k$ be a positive integer.
\begin{itemize}
    \item $H$ is $k$-degenerate if every subgraph $H'$ of $H$ has a vertex of degree at most $k$.
    \item The degeneracy of $H$ is the smallest positive integer $k$ for which $H$ is $k$-degenerate. 
\end{itemize}
\end{definition}

As two examples, 1-degenerate graphs are exactly forests, and every graph with degeneracy at least $k$ contains a subgraph of minimum degree at least $k$. With this definition in mind, the results we will prove in this paper are the following.

\begin{theorem}\label{tree1}
If $H$ is a forest, or equivalently have degeneracy 1, then $g(H,k)=\infty$ for all sufficiently large $k$.
\end{theorem}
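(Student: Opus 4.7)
The plan is to construct, for each sufficiently large $k$ (depending on $H$) and each threshold $N$, some $n\geq N$ and a colour distribution $(e_1,\ldots,e_k)$ that is not realisable as a rainbow $H$-free colouring of $K_n$.

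First I would handle the easy cases. If $H=K_2$, then for $n\geq 2$ every edge of $K_n$ is a trivially rainbow $K_2$, so no rainbow $K_2$-free colouring of $K_n$ exists at all, giving $g(K_2,k)=\infty$ for all $k\geq 1$. If $H=P_3$, then rainbow-$P_3$-freeness forces every two adjacent edges to share a colour, whence the entire colouring is monochromatic for $n\geq 3$; any distribution with two positive entries is therefore bad, so $g(P_3,k)=\infty$ for $k\geq 2$. From now on assume $H$ has at least two edges and $H\neq P_3$.

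The core idea is a structural analysis of rainbow $H$-free colourings combined with a carefully chosen distribution. Consider first the star $H=K_{1,h}$ with $h\geq 3$: in any rainbow-$K_{1,h}$-free colouring of $K_n$, the set $c_v$ of colours on edges incident to $v$ has size at most $h-1$, and $\{c_v\}_{v\in V(K_n)}$ is pairwise intersecting (since every edge $uv$ has colour in $c_u\cap c_v$). A sunflower-style analysis then shows that either only a bounded number of colours are used, or there is a common ``background'' colour $c^*$ in every $c_v$, in which case the $k-1$ non-background colour classes are confined to disjoint vertex sets $V_2,\ldots,V_k$ with $\sum_{i\geq 2}|V_i|\leq n$ and $e_i\leq\binom{|V_i|}{2}$ for $i\geq 2$. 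Taking the distribution $\bigl(\binom{n}{2}-(k-1)m,\,m,m,\ldots,m\bigr)$ with $m$ just above $n^2/\bigl(2(k-1)^2\bigr)$ breaks this constraint (it would require $(k-1)\sqrt{2m}>n$ vertices), so such a distribution is unrealisable. Since appropriate $m$ exists for all sufficiently large $n$, this yields $g(K_{1,h},k)=\infty$.

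For a general forest $H$, I would combine this with case analysis on the shape of $H$. If $H$ contains $P_4$, I would use instead the distribution $\bigl(\binom{n}{2}-(k-1),\,1,1,\ldots,1\bigr)$: the sparse graph $G'$ of the $k-1$ unit classes must avoid $H-e$ for every edge $e\in E(H)$, else one obtains a rainbow $H$ by combining $|E(H)|-1$ unit edges (which automatically have distinct colours) with a single background edge; for appropriate $k$ one checks that $G'$ cannot avoid every member of $\{H-e:e\in E(H)\}$ and the distribution is bad. Intermediate cases where the components of $H$ mix stars and paths can be reduced to one of the two main cases above. The main obstacle I expect is extending the clean structural analysis beyond the star and short-path regimes, and guaranteeing the bad distribution for arbitrarily large $n$ rather than just a bounded range of $n$. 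This likely requires combining Erd\H{o}s--S\'os / Tur\'an-type bounds for forests with refined anti-Ramsey arguments, and scaling the distribution parameters with $n$ so that a valid bad distribution persists for every sufficiently large $n$.
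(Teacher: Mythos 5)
Your proposal has genuine gaps in both of its main cases, and you acknowledge yourself that the general forest case is not reduced to them. The most concrete failure is in your $P_4$-containing case: the distribution $\bigl(\binom{n}{2}-(k-1),1,\dots,1\bigr)$ is in fact realisable as a rainbow $H$-free colouring for many forests containing $P_4$. Take $H=P_5$ and colour the $k-1$ edges of a star $K_{1,k-1}$ centred at a fixed vertex with colours $2,\dots,k$ and everything else with colour $1$. A rainbow $P_5$ would need $4$ distinct colours, hence at least $3$ star edges, but a path uses at most $2$ edges at any vertex; so this colouring is rainbow $P_5$-free and your candidate bad distribution is good. The underlying problem is that tiny colour classes can be concentrated on few vertices, which is exactly what makes rainbow copies of long paths avoidable. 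Your star case also rests on an unproven dichotomy: a pairwise intersecting family of sets of size at most $h-1$ need not have a common element nor bounded union (e.g.\ the sets $\{1,3,4\},\{2,3,5\},\{1,2,6\},\{1,2,7\},\dots$), so the ``common background colour'' conclusion requires the graph structure in an essential way and is not supplied; moreover, even granting a common colour $c^*$, the supports $V_i$ of the other colour classes need not be disjoint when $h\ge4$, since a vertex may see up to $h-2$ non-background colours (this particular point is patchable, losing a factor $h-2$).

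The paper avoids all of this with a single uniform choice of bad distribution, namely the balanced one with every $e_i\le 1+\frac{1}{k}\binom{n}{2}$, and proves the opposite of your intuition: if every colour class has size at most $\frac{1}{D(m)}\binom{n}{2}$ with $D(m)=(6m)^{6m}$, then \emph{every} colouring of $K_n$ contains a rainbow copy of every $m$-vertex tree (Theorem~\ref{tree2}). This is shown by induction on $m$: a pigeonhole/induction argument (Lemma~\ref{smallcolour}) shows at least half the vertices see at least $2m+1$ colours, one embeds the tree minus a leaf into those vertices, and the large colour degree of the leaf's neighbour lets one extend. Forests then reduce to trees via Lemma~\ref{subgraph} by embedding the forest into a tree. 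If you want to salvage your approach, the balanced distribution is the one to analyse; distributions with unit-size classes cannot work for trees with long paths.
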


\begin{theorem}\label{degen3}
If $H$ is a graph on $m$ vertices with degeneracy at least $3$, then $g(H,k)=\Theta_m(k)$.
\end{theorem}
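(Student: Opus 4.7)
The plan is to prove the two matching bounds separately.

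For the lower bound $g(H,k) = \Omega_m(k)$: since $H \subseteq K_m$, Lemma~\ref{subgraph} gives $g(H,k) \geq g(K_m,k)$, so it suffices to show $g(K_m,k) = \Omega_m(k)$. I would exhibit a distribution---for instance, the (near-)uniform one with $e_1 = \cdots = e_k \approx \binom{n}{2}/k$---that cannot be realised as a rainbow $K_m$-free colouring when $n \leq c_m k$. To verify infeasibility, one counts pairs (colour, $m$-subset of vertices) via a Bonferroni-type argument on the edge distribution of each colour class, and compares the resulting lower bound with the upper bound given by rainbow-freeness: each $m$-subset $T$ must contain at most $\binom{m}{2} - 1$ distinct colours on its $\binom{m}{2}$ edges, but on the other hand, the balanced spreading of the $k$ colour classes forces at least $\binom{m}{2}$ colours to be touched by a typical $m$-subset when $n$ is of order $k$.

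For the upper bound $g(H,k) = O_m(k)$: by Lemma~\ref{subgraph} applied to a subgraph $H' \subseteq H$ of minimum degree at least $3$ (which exists since $H$ has degeneracy at least $3$), it suffices to prove the bound assuming $\delta(H) \geq 3$. Given $n \geq C_m k$ and a distribution $(e_1, \ldots, e_k)$ with $e_1 \geq \cdots \geq e_k$, I would construct a rainbow $H$-free colouring via a ``cluster'' approach: partition $V(K_n) = V_1 \sqcup \cdots \sqcup V_s$ with sizes chosen based on the distribution; colour all inter-cluster edges with the dominant colour $1$; within each cluster $V_i$ use a palette $C_i \subseteq \{2, \ldots, k\}$ of at most $e(H) - 1$ colours to cover the internal edges. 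Any copy of $H$ in the resulting colouring either fits inside a single $V_i$---in which case it uses fewer than $e(H)$ distinct colours and cannot be rainbow---or spans multiple clusters, in which case $\delta(H) \geq 3$ forces at least two crossing edges, both of colour $1$, again ruling out rainbow.

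The main technical obstacles concentrate on the upper bound. First, one must arrange the cluster sizes and the colour-to-cluster allocation to realise an arbitrary distribution under the constraint $|C_i| \leq e(H) - 1$; this is handled by a case analysis depending on how concentrated the distribution is (in particular on the size of $e_1$), combined with a greedy packing argument leveraging the slack $n \geq C_m k$. Second, the cut lower bound in the cross-cluster case needs care when $H$ contains a bridge with both endpoints of degree exactly $3$, since this allows a bipartition of $V(H)$ with cut $1$; in such a case, one either further reduces to a $2$-edge-connected subgraph of $H$ with $\delta \geq 3$ (when available) or introduces an additional inter-cluster colour so that any potential rainbow copy would require at least two distinctly-coloured crossing edges, contradicting the cut bound.
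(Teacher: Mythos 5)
Your lower bound is essentially the paper's argument (Lemma~\ref{lemma:linearlowerbound} and Corollary~\ref{generallower}): for the near-balanced sequence on $n=\lfloor k/m^3\rfloor$ vertices, a first-moment count of monochromatic edge pairs inside a uniformly random $m$-set of vertices --- which is exactly your Bonferroni comparison between ``at most $\binom m2-1$ colours per $m$-set'' and the spread of the colour classes --- produces a rainbow $K_m$ and hence a rainbow copy of $H$. That part is fine.

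The upper bound construction, however, cannot work, and the fatal obstruction is quantitative rather than the bridge issue you flag. In your scheme every edge of every colour $i\ge 2$ lies inside some cluster $V_j$ whose palette $C_j$ has size at most $e(H)-1$, so $\binom{|V_j|}{2}\le\sum_{i\in C_j}e_i\le (e(H)-1)\max_ie_i$. Now take the balanced sequence $e_i\approx\binom n2/k$ with $n=C_mk$. Every cluster then satisfies $|V_j|=O_m(n/\sqrt k)$, whence $\sum_j\binom{|V_j|}{2}\le\tfrac12\bigl(\max_j|V_j|\bigr)\sum_j|V_j|=O_m(n^2/\sqrt k)$, while the colours $2,\dots,k$ must occupy $(1-1/k)\binom n2=\Theta(n^2)$ edges inside clusters. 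For fixed $H$ and large $k$ this is impossible, so no choice of clusters and palettes realises the balanced sequence, and no greedy packing ``leveraging the slack'' can rescue it; confining each non-dominant colour to cluster interiors is simply too rigid. The paper's construction (Lemma~\ref{lemma:mindegree3}) is of a different type: order the colours so that $e_1\ge\cdots\ge e_k$, split off a set $T$ of $t$ vertices chosen so that the number $\binom t2+t(n-t)$ of edges meeting $T$ just exceeds $e_k$, colour exactly $e_k$ of those edges with colour $k$ and dump the remainder (fewer than $n-t\le e_1$ of them) into colour $1$, then recurse on $K_{n-t}$ with $k-1$ colours. Here colour $k$ is spread over a vertex cut rather than a cluster interior, which is what makes exact realisation possible; and rainbow-freeness is local: a copy of $H$ whose first-removed vertex lies in $T$ has at that vertex at least $3$ incident edges, all carrying one of only two colours. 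This also sidesteps your bridge/cut-size worry entirely, since no global cut of $H$ is ever invoked.
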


\begin{theorem}\label{degen2}
If $H$ is a graph on $m$ vertices with degeneracy $2$, then \[\Omega_m(k)\leq g(H,k)\leq O(k^{1.5}/(\log k)^{0.5}).\]
\end{theorem}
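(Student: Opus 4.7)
\noindent The proof splits into an upper bound and a lower bound, which I would handle separately.

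For the upper bound, the key observation is that \emph{every Gallai colouring of $K_n$ contains no rainbow cycle at all}, not merely no rainbow triangle. To see this, suppose some Gallai colouring admits a rainbow cycle and let $v_1 v_2 \cdots v_\ell$ be a shortest one; since rainbow triangles are forbidden, $\ell \ge 4$. The Gallai condition applied to the triangle $v_1 v_2 v_3$ forces $c(v_1 v_3) \in \{c(v_1 v_2), c(v_2 v_3)\}$, and replacing the arc $v_1 v_2 v_3$ by this chord produces a rainbow cycle of length $\ell - 1$ (its colour set is $\{c(v_1v_3)\}\cup\{c(v_3v_4),\ldots,c(v_\ell v_1)\}$, which is still a set of $\ell-1$ distinct colours), contradicting minimality. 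Since $H$ has degeneracy at least $2$ it contains a cycle, and any rainbow copy of $H$ in $K_n$ would contain a rainbow sub-cycle; by the above, no Gallai colouring contains one. Hence every Gallai colouring of $K_n$ is rainbow $H$-free, and Theorem~\ref{triangle} immediately yields $g(H, k) \le g(K_3, k) = O(k^{1.5}/(\log k)^{0.5})$.

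For the lower bound, I would examine the distribution $(1, 1, \dots, 1, \binom{n}{2} - (k-1))$ in which $k - 1$ colours are singletons. Let $E_{\mathrm{sp}}$ denote the set of singleton-coloured edges. Any rainbow copy of $H$ can contain at most one edge of the majority colour, so at least $|E(H)| - 1$ of its edges must lie in $E_{\mathrm{sp}}$. Avoiding every rainbow copy of $H$ is therefore equivalent to requiring $E_{\mathrm{sp}}$ to be $(H - e)$-subgraph-free for every $e \in E(H)$. Turning this into a lower bound on $n$ reduces to a Tur\'an-type estimate $|E_{\mathrm{sp}}| \le c_m n$ for the family $\{H - e : e \in E(H)\}$, which gives $k - 1 \le c_m n$ and hence $n \ge \Omega_m(k)$.

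The main technical difficulty is obtaining this linear Tur\'an bound in full generality. When $H$ has an edge $e^*$ whose deletion leaves a forest (as for every cycle $C_\ell$ or any unicyclic $H$), the Erd\H{o}s--Gallai theorem immediately gives $\mathrm{ex}(n, H - e^*) = O_m(n)$. In general, however, no single-edge deletion of $H$ need be a forest (e.g.\ $H = K_4 - e$, where each deletion leaves $C_4$ or a paw). The approach I would take is to exploit a degeneracy-$2$ vertex ordering of $H$: fixing an ordering $v_1, \dots, v_m$ in which each vertex has at most two earlier neighbours, one greedily embeds $H - e$ into $E_{\mathrm{sp}}$ one vertex at a time, each step requiring only a common neighbour in $E_{\mathrm{sp}}$ of at most two previously embedded vertices. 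A convexity/pigeonhole estimate on codegrees, iterated over the $m$ vertices of $H$, should yield $|E_{\mathrm{sp}}| \le c_m n$ and hence the desired $\Omega_m(k)$ lower bound.
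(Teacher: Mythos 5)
Both halves of your argument have genuine gaps. For the upper bound, your observation that every Gallai colouring is rainbow cycle-free is correct (and is the same structural fact the paper exploits, in the form of Lemma~\ref{standardhfree} for standard colourings), and the resulting monotonicity $g(H,k)\le g(K_3,k)$ is valid. But invoking Theorem~\ref{triangle} at this point is circular: in this paper the upper bound of Theorem~\ref{triangle} \emph{is} the upper bound of Theorem~\ref{degen2} specialised to $H=K_3$, and the best bound available from prior work is only $g(K_3,k)=O(k^{1.5})$. The entire content of the upper bound --- constructing, for every $n\ge \beta k^{1.5}/(\log k)^{0.5}$ and every $n$-good sequence, a rainbow cycle-free colouring via the three-stage ``standard colouring'' algorithm with the cushion bookkeeping of Lemmas~\ref{remove1}--\ref{cr} --- is missing from your proposal.

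For the lower bound, your route is genuinely different from the paper's, but the Tur\'an estimate you need is false. With the distribution $(1,\dots,1,\binom n2-(k-1))$, your equivalence is right: a rainbow $H$-free colouring exists if and only if $k-1\le \mathrm{ex}\bigl(n,\{H-e: e\in E(H)\}\bigr)$. However, when no single edge-deletion of $H$ is a forest (your own example $H=K_4-e$, where every further deletion contains a $C_3$ or a $C_4$), this family Tur\'an number is superlinear: any $\{C_3,C_4\}$-free graph avoids the whole family, and such graphs exist with $\Theta(n^{3/2})$ edges, so $\mathrm{ex}(n,\{H-e\})=\Omega(n^{3/2})\gg n$. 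No codegree or greedy-embedding argument can rescue a linear bound, since the obstruction is an explicit construction, not a weakness of the counting; your distribution therefore only yields $g(K_4-e,k)=\Omega(k^{2/3})$. The paper instead takes the near-uniform distribution $e_i\approx\binom n2/k$ and, for $n\le k/m^3$, computes that a uniformly random $m$-subset of vertices has expected number of monochromatic edge pairs $\frac{m(m-1)(m-2)}{n(n-1)(n-2)}\sum_i\binom{e_i}{2}<1$, so some $m$-set spans a rainbow $K_m\supseteq H$ (Lemma~\ref{lemma:linearlowerbound} and Corollary~\ref{generallower}). That argument needs no Tur\'an input and works for every $H$ on $m\ge 3$ vertices.
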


The paper is organised as follows.
We prove Theorem~\ref{tree1}, Theorem~\ref{degen3} and Theorem~\ref{degen2} in Section~\ref{tree}, Section~\ref{d3} and Section~\ref{upper}, respectively. Note that the upper bound in Theorem~\ref{degen2} also gives the upper bound in Theorem~\ref{triangle}. Finally, we prove the lower bound of Theorem~\ref{triangle} in Section~\ref{t}.

\section{Trees}\label{tree}
In this section, we prove Theorem \ref{tree1}. Note that any forest is the subgraph of a tree, therefore, by Lemma \ref{subgraph}, it suffices to prove Theorem \ref{tree1} in the case when $H$ is a tree. In fact, we will prove the following stronger result.

\begin{theorem}\label{tree2}
For every integer $m\geq 2$, there exists a constant $D=D(m)$ such that the following holds: If $H$ be any tree on $m$ vertices, then any edge colouring of $K_n$ such that every colour is used to colour at most $\frac1D\binom{n}{2}$ edges contains a rainbow copy of $H$.
\end{theorem}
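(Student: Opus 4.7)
The plan is to prove Theorem~\ref{tree2} by induction on $m$, strengthened to a counting statement: there exist constants $D(m),\varepsilon(m)>0$ such that for every tree $H$ on $m$ vertices and every edge colouring of $K_n$ with each colour used at most $\binom n2/D(m)$ times, the number of ordered injective rainbow embeddings $\phi\colon V(H)\hookrightarrow V(K_n)$ is at least $\varepsilon(m)n^m$. The base case $m=2$ is immediate with $D(2)=1$ and $\varepsilon(2)=1/2$, since any ordered pair of distinct vertices realises a rainbow copy of the edge.

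For the inductive step, fix a leaf $v$ of $H$ with unique neighbour $u$, and let $H':=H-v$. Every rainbow embedding of $H$ decomposes uniquely as a rainbow embedding $\phi$ of $H'$ (using a colour set $C(\phi)\subset[k]$ of size $m-2$) together with a choice of $w\in V(K_n)\setminus V(\phi)$ satisfying $c(\phi(u)w)\notin C(\phi)$. The crude lower bound $\#\{w\text{ valid}\}\ge(n-m+1)-\sum_{c\in C(\phi)}d_c(\phi(u))$ combined with the inductive estimate $R\ge\varepsilon(m-1)n^{m-1}$ on the number of rainbow $H'$-embeddings reduces the task to showing that
\[
    B \;:=\; \sum_{\phi\text{ rainbow}}\sum_{c\in C(\phi)} d_c(\phi(u))\;\le\; \tfrac12\,(n-m+1)\,R.
\]
I would split $B=B_1+B_2$ according to whether the associated edge $e\in E(H')$ is incident to $u$.

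The term $B_1$ is controlled cleanly by passing to the sum over all injections of $H'$: using the inequality $\sum_{U,A,B\text{ distinct}}d_{c(AB)}(U)\le 4\sum_c e_c^2\le 4\max_c e_c\cdot\binom n2\le 4\binom n2^2/D$ and a factor $(n-3)(n-4)\cdots(n-m+2)$ for placing the remaining vertices of $H'$, one obtains $B_1\le O_m(n^m/D)$, which is at most $\tfrac14(n-m+1)R$ for $D(m)$ sufficiently large. The main obstacle, and the hardest step, will be bounding $B_2$: the naive estimate $\sum_v d_c(v)^2\le 2e_c(n-1)$ only yields $B_2=O_m(n^m)$, comparable with the main term and insensitive to $D$. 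To overcome this I would exploit the rainbow structure: for $\phi$ rainbow, the colour $c(\phi(u)\phi(b))$ appears exactly once in $\phi$, so after grouping embeddings by the pair $(\phi(u),\phi(b))$ and using the inductive counting hypothesis to bound the number of rainbow extensions of such a partial embedding inside $K_n\setminus\{\phi(u),\phi(b)\}$, a Cauchy--Schwarz-type reweighting together with $e_c\le\binom n2/D$ should yield $B_2\le\tfrac14(n-m+1)R$. This closes the induction with $\varepsilon(m)=\varepsilon(m-1)/4$ and $D(m)$ bounded purely in terms of $m$.
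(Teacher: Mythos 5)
Your overall architecture (induction on a leaf $v$ of $H$, extend a rainbow copy of $H'=H-v$ at $\phi(u)$) matches the paper's, and strengthening to a supersaturation count is a legitimate variant; your treatment of $B_1$ is fine. The genuine gap is $B_2$, and it is not merely a missing computation: the inequality you aim for is false, and the tools you invoke cannot in principle close it. Already for $m=3$ (so $H'$ is a single edge and $B=B_2=\sum_U\sum_c d_c(U)^2$), consider the colouring obtained by partitioning $V$ into $2D$ parts $V_1,\dots,V_{2D}$ of size about $n/(2D)$ and giving the edge between $V_i$ and $V_j$ ($i\le j$) colour $i$. Every colour class has at most roughly $\binom n2/D$ edges, yet $\sum_U\sum_c d_c(U)^2=(\tfrac13+o(1))n^3$ for every fixed large $D$, which exceeds your target $\tfrac14(n-m+1)R\approx\tfrac14 n^3$. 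More fundamentally, the generic bounds you propose are tight at the order of the main term: $\sum_U\sum_c d_c(U)^2\le 2(n-1)\binom n2\approx n(n-1)^2=(n-m+1)R$ for $m=3$, and the constraint $e_c\le\binom n2/D$ enters only through excluding configurations in which almost every vertex has a near-full dominant colour. Ruling those out is a \emph{structural} statement about colour degrees, not something Cauchy--Schwarz plus the class-size bound delivers; and for general $m$ you face the further, unaddressed issue that $B_2$ is an $N(U,B)$-weighted average, so even a bound like $\sum_{U,B}d_{c(UB)}(U)\le(1-\varepsilon)n^3$ does not control $B_2/((n-m+1)R)$ when the rainbow copies of $H'$ concentrate on vertices $\phi(u)$ with a high-degree blocked colour (recall $R$ may be only $\varepsilon(m-1)n^{m-1}$).

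The missing ingredient is exactly what the paper supplies as Lemma~\ref{smallcolour}: for $D=(6m)^{6m}$, at most $n/2$ vertices are incident to at most $2m$ colours. The paper then runs the induction \emph{inside} the set $G'$ of vertices seeing at least $2m+1$ colours (which still satisfies the colour-density hypothesis with $D(m-1)$), and extends at the leaf greedily, since $\phi(u)\in G'$ sees more than $m-2$ colours and hence has an unblocked edge; no second-moment estimate is needed. If you want to keep your counting framework, you would need to prove a quantitative version of that lemma (e.g., that for most choices of $\phi(u)$ the $2m$ largest colour degrees at $\phi(u)$ sum to at most $(1-\varepsilon)n$) and restrict the count to embeddings with $\phi(u)$ in the good set; as written, the ``Cauchy--Schwarz-type reweighting'' is a placeholder for the actual heart of the proof.
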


Note that the condition in Theorem \ref{tree2} implies implicitly that $\frac1D\binom{n}{2}\geq 1$, or equivalently $n\geq\Omega(\sqrt D)$.

We first show that the Theorem~\ref{tree2} implies Theorem~\ref{tree1}.

\begin{proof}[Proof of Theorem \ref{tree1}]
Let $D=D(m)$ be the constant in Theorem~\ref{tree2}. Let $k\geq\lfloor 2D \rfloor$ be an integer. For any sufficiently large integer $n$, assume that $\binom{n}{2}=qk+r$, where $0\le r\le k-1$. Set $e_1=e_2=\cdots=e_{k-r}=q$ and $e_{k-r+1}=\cdots=e_k=q+1$. Then $(e_1,\cdots,e_k)$ is $n$-good and $$e_i\le 1+\frac1k\binom{n}{2}\le \frac1D\binom{n}{2}$$
for all $i\in[k]$. Hence, by Theorem~\ref{tree2}, any colouring of $K_n$ with colour distribution sequence $(e_1,\cdots,e_k)$ contains a rainbow copy of $H$. From definition, this proves $g(H,k)=\infty$.
\end{proof}

We will prove Theorem \ref{tree2} with $D(m)=(6m)^{6m}$. Given an edge colouring $c$ of $K_n$ and any vertex $v\in V$, let
\[C(v)=\bigcup_{v'\in V\setminus\{v\}}\{c(vv')\}\]
be the set of colours assigned to edges adjacent to $v$. The following technical lemma shows that at least half of the vertices satisfy $|C(v)|\geq2m+1$.
\begin{lemma}\label{smallcolour}
Let $m\geq 2$ be an integer and let $D=(6m)^{6m}$. Let $c$ be an edge colouring of $K_n$ such that every colour is used to colour at most $\frac1D\binom{n}{2}$ edges. Then there are at most $\frac12n$ vertices $v$ such that $|C(v)|\le 2m$.
\end{lemma}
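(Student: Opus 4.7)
I would argue by contradiction via a double count on the edges spanned by the bad vertices. Suppose for contradiction that $|B| > n/2$, where $B = \{v \in V(K_n) : |C(v)|\leq 2m\}$. For each colour $i$, set $n_i := |\{v \in B : i \in C(v)\}|$. The bound $|C(v)|\leq 2m$ for $v\in B$ immediately gives $\sum_i n_i \leq 2m|B|$.

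Next I would estimate the number of edges of colour $i$ with both endpoints in $B$ in two complementary ways. Any such edge has both endpoints in the $n_i$-set $\{v \in B : i \in C(v)\}$, so there are at most $\binom{n_i}{2}$ of them; on the other hand, the hypothesis says the whole $i$-th colour class contains at most $\tfrac{1}{D}\binom{n}{2}$ edges. Since $B$ spans $\binom{|B|}{2}$ edges in total,
\[
\binom{|B|}{2} \;\leq\; \sum_i \min\!\left(\binom{n_i}{2},\;\tfrac{1}{D}\binom{n}{2}\right).
\]

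The crucial step is to balance these two bounds via $\min(a,b)\leq\sqrt{ab}$, together with $\binom{x}{2}\leq x^2/2$:
\[
\min\!\left(\binom{n_i}{2},\;\tfrac{1}{D}\binom{n}{2}\right) \;\leq\; \sqrt{\binom{n_i}{2}\cdot \tfrac{1}{D}\binom{n}{2}} \;\leq\; \frac{n_i\, n}{2\sqrt{D}}.
\]
Summing over $i$ and using $\sum_i n_i \leq 2m|B|$ yields $\binom{|B|}{2} \leq m|B|n/\sqrt{D}$, hence $|B|\leq 1+2mn/\sqrt{D}$. With $\sqrt{D}=(6m)^{3m}$ dwarfing $m$, and the implicit hypothesis $\tfrac{1}{D}\binom{n}{2}\geq 1$ forcing $n$ to be at least of order $\sqrt{D}$, this upper bound is strictly less than $n/2$, contradicting $|B|>n/2$.

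The only non-routine idea is the AM--GM step $\min(a,b)\leq\sqrt{ab}$, which interpolates between the local bound (arising from the small $2m$-set of colours at each bad vertex) and the global bound (total colour-class size); everything else is routine bookkeeping, so I don't anticipate further obstacles.
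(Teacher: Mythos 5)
Your proof is correct, and it takes a genuinely different route from the paper. The paper proceeds by an iterative pigeonhole argument: assuming more than $\frac12 n$ bad vertices, it constructs a nested chain $V_0\supseteq V_1\supseteq\cdots\supseteq V_{2m}$ of vertex sets, each shrinking by a factor of $6m$, whose colour palettes share progressively more colours, until all edges inside the final (still large) set $V_{2m}$ are forced to use only $2m$ colours, contradicting the bound on colour class sizes. Your argument replaces this entire induction with a single double count: writing $n_i$ for the number of bad vertices seeing colour $i$, you bound the colour-$i$ edges inside $B$ by $\min\bigl(\binom{n_i}{2},\tfrac1D\binom n2\bigr)$, interpolate via $\min(a,b)\le\sqrt{ab}$, and use $\sum_i n_i\le 2m|B|$ to get $|B|\le 1+2mn/\sqrt D$. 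Combined with the implicit condition $n=\Omega(\sqrt D)$ (which the paper also relies on), this contradicts $|B|>n/2$. Both proofs ultimately exploit the same tension — a large set of vertices each seeing few colours forces some colour class to be large — but yours is shorter and quantitatively stronger: it shows $|B|=O(mn/\sqrt D)$, and in particular the lemma already holds for $D$ polynomial in $m$ (say $D>64m^2$), whereas the paper's chain construction genuinely needs $D$ of exponential size in $m$. (The huge value $D=(6m)^{6m}$ is still needed elsewhere, for the induction in the proof of Theorem 2.4, so this does not improve the final result, but it is a cleaner proof of the lemma itself.)
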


\begin{proof}
We prove this lemma by contradiction. Assume that there is a set $V_0$ of at least $\frac12n$ vertices $v\in V(K_n)$ such that $|C(v)|\le 2m$. We show by induction that for every integer $0\leq j\leq 2m$, we can find a subset $V_j$ of $V_0$ such that

\[ \left|\bigcap_{v\in V_j}C(v)\right|\ge j\text{\quad and\quad}|V_j|\ge \frac{n}{2(6m)^j}.\]

The base case follows from the definition of $V_0$. Suppose now we have picked $V_j$ as required for some $0\leq j\leq 2m-1$. Without loss of generality, assume that
\[\{1,2,\dots,j\}\subseteq \bigcap_{v\in V_j}C(v).\]
From the assumption, there are at most $\frac{j{\binom n2}}{D}$ edges coloured with one of the colours $1,2,\cdots,j$. Hence, by pigeonhole, there exists a vertex $u\in V_j$ such that there are at most $\frac{2j\binom{n}{2}}{D|V_j|}$ edges adjacent to $u$ with colour $1,2,\cdots,j$. Note that for any $u'\in V_j\setminus\{u\}$, the colour $c(uu')$ of the edge $uu'$ must belong to the set $C(u)$, which has size at most $2m$. By pigeonhole again, there exists some colour $q\in C(u)\setminus[j]$ such that at least
\[\frac{|V_j|-1-\frac{2j\binom{n}{2}}{D|V_j|}}{2m-j}\]
vertices $u'\in V_j\setminus\{u\}$ satisfy $c(uu')=q$. Set
\[V_{j+1}=\{u'\mid u'\in V_j\setminus\{u\}\text{ and } c(uu')=q.\}\]
We verify that $V_{j+1}$ works. 

Indeed, the first condition holds as
\[\{1,2,\cdots,j,q\}\subseteq \bigcap_{v\in V_{j+1}}C(v).\]
For the second condition, we have
\begin{align*}
|V_{j+1}|&\ge \frac{|V_j|-1-\frac{2j\binom{n}{2}}{D|V_j|}}{2m-j}\\
&\ge \frac{|V_j|-1-\frac{2j(n-1)(6m)^j}{D}}{2m} && \text{by }|V_j|\ge \frac{n}{2(6m)^j} \\ 
&\ge \frac{|V_j|-1-\frac n{(6m)^{2m}}}{2m} && \text{by }D=D(m)=(6m)^{6m}\\
&\ge \frac{|V_j|}{6m}\ge \frac{n}{2(6m)^{j+1}}, && \text{by }|V_j|\ge \frac{n}{2(6m)^j} 
\end{align*}
as required. Hence, we can pick the sets $V_0,V_1,\cdots,V_{2m}$ by induction. 

Note that for any $u\in V_{2m}$, we have
$$2m\geq|C(u)|\geq\left|\bigcap_{v\in V_{2m}}C(v)\right|\geq 2m.$$
Hence, there exists a set $C$ of size $2m$ such that $C(u)=C$ for all $u\in V_{2m}$. In particular, for any two vertices $u,u'\in V_{2m}$, the colour of the edge $uu'$ must belong to this set $C$. Therefore, we have
\[\binom{|V_{2m}|}{2}\le 2m\frac{\binom{n}{2}}{D}.\]
However, 
\[\binom{|V_{2m}|}{2}\ge \frac{1}{4}|V_{2m}|^2\ge \frac{n^2}{16(6m)^{4m}}\ge \frac{n^2}{(6m)^{5m}}> 2m\frac{\binom{n}{2}}{D},\]
a contradiction. This completes the proof of Lemma \ref{smallcolour}.
\end{proof}

Now we can prove Theorem \ref{tree2}.

\begin{proof}[Proof of Theorem \ref{tree2}]
We will show that $D(m)=(6m)^{6m}$ works.
Proceed by induction on $m$. The case $m=2$ is trivial. Suppose we have already established the theorem for all trees with fewer than $m$ vertices. Since $H$ is a tree, it has a leaf $v_0$. Let the vertices in $H$ be $v_0,v_1,\cdots, v_{\ell-1}$, and suppose the unique edge adjacent to $v_0$ is $v_0v_1$. Let $H'=H-v_0$. 

Let $G'$ be the subgraph of $K_n$ induced by the set of vertices $v$ satisfying $|C(v)|\ge 2m+1$, then $G'$ contains at least $\frac12n$ vertices by Lemma \ref{smallcolour}. From assumption, each colour is used at most
\[\frac{\binom{n}{2}}{D(m)}\le \frac{\binom{|v(G')|}{2}}{D(m-1)}\]
times in $G'$. Therefore, we may apply induction hypothesis to find a rainbow copy of $H'$ in $G'$. Suppose vertex $v_i$ in $H'$ is embedded to vertex $u_i$ in $G'$ for all $1\le i\le m-1$. Note that

\[\left|\bigcup_{v\in V\setminus\{u_2,\cdots,u_{m-1}\}}\{c(u_1v)\}\right|\ge \big|C(u_1)\big|-(m-2)\geq m-1.\]
Hence, we can find a vertex $u_0\in V\setminus\{u_1,u_2,\cdots,u_{k-1}\}$ such that the edge $u_0u_1$ has a different colour to all $m-2$ edges in this rainbow copy of $H'$. It follows that there exists a rainbow copy of $H$ in $K_n$ with vertices $u_0,u_1,\cdots,u_{m-1}$.
\end{proof}

\section{Graphs with degeneracy at least 3}\label{d3}
In this section we look at the case when $H$ is a graph with degeneracy at least 3 and prove Theorem \ref{degen3}. We begin with the lower bound. 
\begin{lemma}\label{lemma:linearlowerbound}
Suppose $n\geq m\geq 3$ and $e_1,\cdots,e_k\geq0$ are integers such that $(e_1,\cdots,e_k)$ is $n$-good and
\[\sum_{i=1}^k\binom{e_i}{2}<\frac{n(n-1)(n-2)}{m(m-1)(m-2)}.\]
Then any $(e_1,\cdots,e_k)$-colouring of $K_n$ contains a rainbow copy of $K_m$. 
\end{lemma}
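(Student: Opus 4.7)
The plan is to proceed by a first-moment / double-counting argument. The total number of $m$-cliques in $K_n$ is $\binom{n}{m}$, so it suffices to upper bound the number of \emph{non-rainbow} copies of $K_m$ and show this is strictly less than $\binom{n}{m}$; any remaining copy must then be rainbow.

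A copy of $K_m$ fails to be rainbow precisely when it contains two edges of the same colour. For each colour $i$, the number of unordered pairs of edges of that colour is $\binom{e_i}{2}$, so the total number of monochromatic pairs of edges is $\sum_{i=1}^{k}\binom{e_i}{2}$. For any fixed pair of edges in $K_n$, I claim the number of copies of $K_m$ containing both is at most $\binom{n-3}{m-3}$: if the two edges share a vertex, they determine $3$ vertices and we must choose the remaining $m-3$ from $n-3$ vertices; if the two edges are disjoint, they determine $4$ vertices and we must choose $m-4$ from $n-4$, which is at most $\binom{n-3}{m-3}$ (and the disjoint case cannot arise when $m=3$). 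Summing over all monochromatic pairs gives
\[\#\{\text{non-rainbow } K_m \text{ in } K_n\}\;\leq\; \binom{n-3}{m-3}\sum_{i=1}^{k}\binom{e_i}{2}.\]

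The conclusion then follows from the elementary identity
\[\frac{\binom{n}{m}}{\binom{n-3}{m-3}}=\frac{n(n-1)(n-2)}{m(m-1)(m-2)},\]
which shows that the hypothesis of the lemma is precisely the condition making the above upper bound strictly less than $\binom{n}{m}$. The argument is entirely elementary with no significant obstacle; the only slightly subtle point is the uniform bound of $\binom{n-3}{m-3}$ on the number of $m$-cliques through two prescribed edges, which needs to be verified separately depending on whether the two edges share a vertex.
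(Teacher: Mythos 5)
Your proof is correct and is essentially the same argument as the paper's: the paper picks a uniformly random $m$-set and shows the expected number of monochromatic edge pairs inside it is less than $1$, which is exactly your counting bound divided by $\binom{n}{m}$ (note $\binom{n-3}{m-3}/\binom{n}{m}=\frac{m(m-1)(m-2)}{n(n-1)(n-2)}$). The only difference is presentation — deterministic double counting versus the first-moment method — and your case analysis for shared versus disjoint edge pairs matches the paper's.
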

\begin{proof}
Fix an $(e_1,\cdots,e_k)$-colouring of $K_n$. For each $i\in[k]$, let $E_i$ be the set of edges in $K_n$ that has colour $i$. Pick a size $m$ subset $U$ of $[n]$ uniformly at random. For each $i\in[k]$ and any distinct edges $e,e'\in E_i$, the probability that both $e$ and $e'$ are in $U$ is either $\frac{\binom{n-3}{m-3}}{\binom{n}{m}}=\frac{m(m-1)(m-2)}{n(n-1)(n-2)}$ if $e$ and $e'$ share a vertex, or $\frac{\binom{n-4}{m-4}}{\binom{n}{m}}=\frac{m(m-1)(m-2)(m-3)}{n(n-1)(n-2)(n-3)}\leq\frac{m(m-1)(m-2)}{n(n-1)(n-2)}$ if they don't. Using a union bound, we see that the expected number of monochromatic edge pairs in $U$ is at most 
\[\frac{m(m-1)(m-2)}{n(n-1)(n-2)}\sum_{i=1}^k\binom{e_i}{2}<1.\]
Hence, there exists a realisation of $U$ such that the complete graph induced by the $m$ vertices in $U$ is rainbow. 
\end{proof}

\begin{corollary}\label{generallower}
Let $H$ be a graph on $m$ vertices. Then $g(H,k)\geq k/m^3$ for all sufficiently large $k$.
\end{corollary}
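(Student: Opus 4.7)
The plan is to reduce to the complete graph case and then invoke Lemma \ref{lemma:linearlowerbound} with a nearly-balanced colour distribution. Since $H$ has $m$ vertices, it is a subgraph of $K_m$, so by Lemma \ref{subgraph} we have $g(H,k) \geq g(K_m, k)$; hence it suffices to prove $g(K_m, k) \geq k/m^3$ for all sufficiently large $k$.

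To do this, I would set $n := \lfloor k/m^3 \rfloor$ (which satisfies $n \geq m$ once $k$ is large enough), write $\binom{n}{2} = qk + r$ with $0 \leq r < k$, and take the nearly-balanced $n$-good sequence $e_1 = \cdots = e_{k-r} = q$, $e_{k-r+1} = \cdots = e_k = q+1$. A short convexity computation (using $\binom{q+1}{2} - \binom{q}{2} = q$) yields $\sum_{i=1}^k \binom{e_i}{2} \leq \binom{n}{2}^2/(2k)$. To apply Lemma \ref{lemma:linearlowerbound}, it remains to check that this is strictly less than $n(n-1)(n-2)/(m(m-1)(m-2))$: after clearing common factors the inequality becomes roughly $n m^3 \leq 8k$ up to lower order terms, which follows with room to spare from $n \leq k/m^3$ (concretely, bounding $m(m-1)(m-2) \leq m^3$ and using $n m^3 \leq k$ collapses the inequality to the trivial $n-1 < 8(n-2)$, valid for $n \geq 3$). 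Lemma \ref{lemma:linearlowerbound} then produces an $n$-good sequence admitting no rainbow $K_m$-free colouring, giving $g(K_m, k) > n$.

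There is no substantive obstacle; the only point requiring a little care is the off-by-one from $g(K_m, k) > \lfloor k/m^3 \rfloor$ to the stated lower bound $k/m^3$, which is absorbed by the ``sufficiently large $k$'' hypothesis.
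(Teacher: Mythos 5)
Your proposal is correct and follows essentially the same route as the paper: the same choice $n=\lfloor k/m^3\rfloor$ with the nearly-balanced sequence $\binom{n}{2}=qk+r$, the same convexity bound on $\sum_i\binom{e_i}{2}$, and the same application of Lemma~\ref{lemma:linearlowerbound} (the paper folds your Lemma~\ref{subgraph} reduction into the observation that a rainbow $K_m$ contains a rainbow $H$). The only cosmetic difference is that the paper also disposes of the degenerate case $m\le 2$ in one line, which your argument would need as well since Lemma~\ref{lemma:linearlowerbound} requires $m\ge 3$.
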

\begin{proof}
When $m\le 2$, the theorem is trivial. When $m\ge 3$, let $n=\lfloor k/m^3\rfloor$ and assume $k$ is large enough so that $\binom n2\geq k$. Suppose that $\binom{n}{2}=qk+r$, where $0\le r\le k-1$. Set $e_1=e_2=\cdots=e_{k-r}=q$ and $e_{k-r+1}=\cdots=e_k=q+1$. Then
\begin{align*}
\sum_{i=1}^k\binom{e_i}{2}
&\le \sum_{i=1}^k\frac{1}{2}\left(\frac{\binom{n}{2}}{k}+1\right)\frac{\binom{n}{2}}{k}\\
&\leq\sum_{i=1}^k\frac{\binom{n}{2}^2}{k^2}<\frac{n^4}{4k}\le \frac{n^3}{4m^3}\\
&\le \frac{n(n-1)(n-2)}{m(m-1)(m-2)}.
\end{align*} So by Lemma \ref{lemma:linearlowerbound}, we can find a rainbow copy of $K_m$, and hence a rainbow copy of $H$ in this colouring of $K_n$.
\end{proof}

For the upper bound, we first prove it under the stronger assumption that $H$ has minimum degree at least 3.
\begin{lemma}\label{lemma:mindegree3}
Let $H$ be a graph on $m$ vertices with minimum degree at least 3. Assume $n\geq 2k$ and $(e_1,\cdots,e_k)$ is an $n$-good sequence. Then there exists a rainbow $H$-free $(e_1,\cdots,e_k)$-colouring of $K_n$.  
\end{lemma}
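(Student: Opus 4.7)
The plan is an induction on $k$, with base case $k=1$ being immediate: a monochromatic colouring contains no rainbow copy of $H$ since $H$ has at least $3m/2 \geq 6$ edges. For the inductive step, the idea is to reserve the least-used colour for edges incident to a small ``special'' set of vertices, designed so that at every special vertex the incident edges use only two colours.

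Concretely, after relabelling assume $e_1 \geq e_2 \geq \cdots \geq e_k$; if $e_k = 0$ the lemma follows by applying the inductive hypothesis to $(e_1,\ldots,e_{k-1})$ on $K_n$ (valid since $n \geq 2k > 2(k-1)$), so assume $e_k \geq 1$. Writing $T(t) := tn - \binom{t+1}{2}$ for the number of edges incident to a $t$-vertex set in $K_n$, let $t^*$ be the smallest positive integer with $T(t^*) \geq e_k$, and fix any $U \subseteq V(K_n)$ with $|U| = t^*$. I colour exactly $e_k$ of the $T(t^*)$ edges incident to $U$ with colour $k$ and the remaining $T(t^*)-e_k$ such edges with colour $1$, and then invoke the inductive hypothesis on $K_{n - t^*} = K_n[V \setminus U]$ with the distribution $(e_1', e_2,\ldots,e_{k-1})$, where $e_1' := e_1 - (T(t^*) - e_k)$, to colour the remaining edges.

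There are three things to verify. First, the inductive hypothesis is applicable: since $e_k \leq \binom{n}{2}/k$, it suffices to show $T(n-2k+2) \geq \binom{n}{2}/k$, which reduces after clearing denominators to the inequality $(k-1)\bigl[j^2 + (4k-1)j + 4k\bigr] \geq 0$ with $j = n - 2k \geq 0$; this holds for all $k \geq 1$, so $t^* \leq n - 2k + 2$ and hence $n - t^* \geq 2(k-1)$. Second, $e_1' \geq 0$ and the new sequence is $(n-t^*)$-good: the gap $T(t^*) - T(t^*-1) = n - t^*$ combined with minimality of $t^*$ (and integrality) gives $T(t^*) - e_k \leq n - t^* - 1 \leq n - 1 \leq e_1$, using $e_1 \geq \binom{n}{2}/k \geq n - 1$, which is again a direct consequence of $n \geq 2k$; the goodness is immediate from $\binom{n}{2} - T(t^*) = \binom{n-t^*}{2}$.

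Third, and most importantly, the resulting colouring is rainbow $H$-free: any rainbow copy of $H$ either lies entirely in $V \setminus U$, contradicting the inductive hypothesis, or it uses some vertex $u \in U$ as the image of a vertex of $H$ with degree at least $3$. In the latter case at least three edges of the copy are incident to $u$ and bear pairwise distinct colours, but every edge incident to $u$ has colour in $\{1, k\}$, a contradiction. The main obstacle is verifying the single inequality $T(n-2k+2) \geq \binom{n}{2}/k$ for $n \geq 2k$; this is precisely where the hypothesis $n \geq 2k$ is used essentially, and everything else is routine bookkeeping.
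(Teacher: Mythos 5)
Your proposal is correct and follows essentially the same route as the paper's proof: induct on $k$, split off the smallest vertex set $U$ whose incident edges can absorb the least-used colour $k$ (with the overflow dumped into the most-used colour $1$), recurse on the complement, and note that any copy of $H$ meeting $U$ has a vertex of degree at least $3$ seeing only two colours. The only cosmetic differences are the base case ($k=1$ versus $k\le 5$) and the bookkeeping used to show $n-t^*\ge 2(k-1)$ (you bound $t^*\le n-2k+2$ directly, the paper shows $t\le n/k$); both are valid.
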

\begin{proof}
We prove this by induction on $k$. Note that every graph $H$ with minimum degree 3 contains at least 6 edges, so any colouring using less than 6 colours will be rainbow $H$-free. Thus the statement is trivially true for all $k\leq5$. Assume now $k\geq 6$.

Without loss of generality, assume $e_1\geq\cdots\geq e_k>0$. Then $e_1\geq\frac1k\binom{n}{2}\geq n-1$ and $e_k\leq\frac1k\binom n2$. Let $t\geq1$ be the smallest integer satisfying $\binom{t}{2}+t(n-t)\geq e_k$. By minimality, $\binom{t-1}{2}+(t-1)(n-t+1)<e_k$, 
and thus 
\[\binom{t}{2}+t(n-t)-e_k<n-t\le e_1.\] This means that we can colour $e_k$ of the $\binom{t}{2}+t(n-t)$ edges adjacent to $v_{n-t+1},\cdots,v_n$ in $K_n$ with colour $k$, and colour the remaining ones with colour 1. 

Also, since $$\binom{\frac nk}{2}+\frac nk\left(n-\frac nk\right)=\frac n{2k}\left(2n-\frac nk-1\right)\geq\frac n{2k}(n-1)=\frac1k\binom{n}2\geq e_k,$$ we have $t\leq\frac nk$ and thus $n-t\geq n(1-\frac1k)\geq 2(k-1)$. Let $e_1'=e_1-\binom t2-t(n-t)+e_k$ and $e_i'=e_i$ for all $2\leq i\leq k-1$. It is easy to verify that $(e_1', e_2', \cdots, e_{k-1}')$ is an $(n-t)$-good sequence. Hence, we can use induction hypothesis to find a rainbow $H$-free $(e_1',\cdots,e_{k-1}')$-colouring of $K_{n-t}$. Combining this with the colouring above for edges adjacent to $v_{n-t+1},\cdots, v_n$, we obtain an $(e_1,\cdots,e_k)$-colouring of $K_n$. 

We claim that this colouring is rainbow $H$-free. Indeed, let $H'$ be any subgraph of $K_n$ isomorphic to $H$. If all vertices of $H'$ are from $v_1,\cdots,v_{n-t}$, then it is not rainbow by the inductive construction. Otherwise, $H'$ contains least one of $v_{n-t+1},\cdots,v_n$. But edges adjacent to each of these vertices can only have colour 1 or $k$, so $H'$ not rainbow as $H'\cong H$ has minimum degree 3. 
\end{proof}

We can now deduce Theorem \ref{degen3}. 
\begin{proof}[Proof of Theorem \ref{degen3}]
Let $H$ be a graph on $m$ vertices with degeneracy at least 3. Then $H$ must contain a subgraph $H'$ with minimum degree at least 3. Lemma \ref{lemma:mindegree3} implies that $g(H',k)\leq 2k$. So by Lemma \ref{subgraph}, $g(H,k)\leq g(H',k)\leq2k$, which gives the upper bound. The lower bound follows from Corollary \ref{generallower}.
\end{proof}

\section{Graphs with degeneracy 2}\label{upper}
In this section, we study the case when $H$ is a graph with degeneracy 2 and prove Theorem \ref{degen2}. We only need to prove the upper bound as the lower bound follows from Corollary \ref{generallower}. Let $H$ be a graph with degeneracy 2. To prove an upper bound of the form $g(H,k)\leq N$, we need to construct a rainbow $H$-free $(e_1,\cdots,e_k)$-colouring of $K_n$ for every $n\geq N$ and every $n$-good sequence $(e_1,\cdots,e_k)$. It turns out that repeated suitable applications of the following colouring steps would work. 

\begin{definition}
Suppose we have a complete graph $K_n$ and a sequence $(e_1,\cdots,e_k)$ satisfying $\sum_{i=1}^ke_i\geq\binom{n}{2}$.
\begin{itemize}
    \item A standard colouring step on $K_n$ involves splitting it into $K_t$ and $K_{n-t}$ for some $1\leq t\leq\floor{\frac n2}$, and colouring all $t(n-t)$ edges between them with some colour $i$ satisfying $e_i\geq t(n-t)$. The size of such a standard colouring step is $t$.
    \item A simple colouring step is a standard colouring step of size 1.
\end{itemize}
\end{definition}

We are interested in colourings of $K_n$ that can be obtained entirely from standard colouring steps. 

\begin{definition}
Given a complete graph $K_n$ and a sequence $(e_1,\cdots,e_k)$ satisfying $\sum_{i=1}^ke_i\geq\binom{n}{2}$, we say $K_n$ has a standard colouring (with respect to $(e_1,\cdots,e_k)$) if it has an edge colouring that can be obtained from a sequence of standard colouring steps. More formally, $K_n$ has a standard colouring if it has an edge colouring that can be obtained using the following stepwise algorithm, where we view each set $S_\ell$ as a multiset. 
\begin{itemize}
    \item Initialise by setting $S_0=\{K_n\}$ and $e_{0,j}=e_j$ for all $j\in[k]$
    \item For $\ell\geq 0$, if $S_\ell=\{K_1,\cdots,K_1\}$, terminate.
    
    Otherwise, pick some $K_m\in S_\ell$ with $m\geq 2$, some $i\in[k]$ and some $1\leq t\leq\floor{\frac m2}$, such that $t(m-t)\leq e_{\ell,i}$.
    \item Set $S_{\ell+1}=(S_\ell\setminus\{K_m\})\cup\{K_t,K_{m-t}\}$, $e_{\ell+1,i}=e_{\ell,i}-t(m-t)$ and $e_{\ell+1,j}=e_{\ell,j}$ for all $j\in[k]\setminus\{i\}$. Note that this correspoinds to a standard colouring step on $K_m$ of size $t$ using colour $i$. 
    \item Repeat the second and third step.
\end{itemize}
Similarly, we say $K_n$ has a simple colouring if it has an edge colouring that can be obtained from a sequence of simple colouring steps. 
\end{definition}

We first prove a lemma that explains why we are interested in standard colouring steps and standard colourings. 
\begin{lemma}\label{standardhfree}
Every standard colouring of $K_n$ is rainbow cycle-free, and therefore rainbow $H$-free for all graphs $H$ with degeneracy 2.
\end{lemma}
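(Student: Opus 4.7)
The plan is to establish the rainbow cycle-free statement by strong induction on $n$, and then deduce the rainbow $H$-free statement from the fact that a graph of degeneracy $2$ must contain a cycle (since otherwise it would be a forest and have degeneracy $1$).

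For the inductive argument, the base case $n\leq 2$ is trivial since $K_n$ has no cycles. For the inductive step, consider any standard colouring of $K_n$ and examine its very first colouring step. By definition, this step picks some $1\leq t\leq \lfloor n/2\rfloor$ and some colour $i$, splits $V(K_n)$ into sets $A,B$ with $|A|=t$, $|B|=n-t$, and assigns colour $i$ to every edge in the complete bipartite graph between $A$ and $B$. The subsequent colouring steps act only on $K_t$ and $K_{n-t}$, and in particular the induced colourings on these two cliques are themselves standard colourings with respect to the updated colour budgets. By the inductive hypothesis, neither the colouring on $K_t$ nor the one on $K_{n-t}$ contains a rainbow cycle.

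Now let $C$ be any cycle in $K_n$. If $V(C)\subseteq A$ or $V(C)\subseteq B$, then $C$ is a cycle in $K_t$ or $K_{n-t}$ respectively, so $C$ is not rainbow by induction. Otherwise $C$ must alternate between $A$ and $B$, and since $C$ is closed, the number of edges of $C$ crossing from $A$ to $B$ is even and at least $2$. All such crossing edges have colour $i$, so $C$ has at least two edges of the same colour and is not rainbow. This completes the induction and shows that every standard colouring of $K_n$ is rainbow cycle-free.

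Finally, suppose $H$ has degeneracy $2$. Then $H$ is not a forest, so $H$ contains a cycle $C'$. If a standard colouring of $K_n$ contained a rainbow copy of $H$, then the image of $C'$ in this copy would be a rainbow cycle in $K_n$, contradicting what we just proved. Hence every standard colouring is rainbow $H$-free. The argument is essentially routine; the only mild subtlety is verifying that the induced colourings on $K_t$ and $K_{n-t}$ are genuinely standard colourings with respect to the remaining colour counts, which follows directly from unfolding the stepwise algorithm in the definition.
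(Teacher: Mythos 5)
Your proof is correct and is essentially the same argument as the paper's: both identify the first splitting step that separates the cycle's vertices and observe that the cycle must then use at least two crossing edges, all of one colour (you package this as induction on $n$, the paper as a minimal-index argument, but the key fact is identical). One small wording issue: a cycle meeting both $A$ and $B$ need not \emph{alternate} between them, but your subsequent parity argument (an even number, at least two, of crossing edges) is the correct justification and makes the step sound.
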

\begin{proof}
For every $3\leq k\leq n$, let $C$ be an arbitrary cycle of length $k$ in $K_n$. Let $\ell\geq1$ be the smallest index such that vertices in $C$ do not all belong to the same complete graph in $S_\ell$. By minimality, they all belong to the same complete graph in $S_{\ell-1}$, say $K_m$, that was split into two complete graphs $K_t$ and $K_{m-t}$ by a standard colouring step in the $\ell$-th iteration of the algorithm. From definition, all edges between $K_t$ and $K_{m-t}$ have the same colour. Since at least two edges on $C$ go between $K_t$ and $K_{m-t}$, $C$ is not rainbow. 
\end{proof}

Assume now that the algorithm starts with a complete graph $K_n$ and an $n$-good sequence $(e_1,\cdots,e_k)$. Then throughout the algorithm, there is a relation between the sizes of the complete graphs in $S_{\ell}$ and the $e_{\ell,j}$'s.
\begin{lemma}\label{relation}
For every $\ell\geq 0$, if $S_\ell=\{K_{m_1},\cdots,K_{m_r}\}$, then 
\[\sum_{j=1}^r\binom{m_j}{2}=\sum_{j=1}^ke_{\ell,j}.\]
\end{lemma}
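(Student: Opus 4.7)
The plan is to prove the identity by induction on $\ell$, tracking exactly what changes from $S_\ell$ to $S_{\ell+1}$ in one iteration of the algorithm.

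For the base case $\ell = 0$, we have $S_0 = \{K_n\}$ and $e_{0,j} = e_j$ for all $j \in [k]$. The left-hand side is $\binom{n}{2}$, and the right-hand side is $\sum_{j=1}^k e_j$, which equals $\binom{n}{2}$ precisely because $(e_1, \ldots, e_k)$ is $n$-good. So the two sides agree.

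For the inductive step, suppose the identity holds at stage $\ell$, and suppose the $(\ell+1)$-st iteration picks some $K_m \in S_\ell$ along with a colour $i \in [k]$ and a size $1 \leq t \leq \lfloor m/2 \rfloor$ with $t(m-t) \leq e_{\ell,i}$. Then $S_{\ell+1}$ is obtained from $S_\ell$ by removing one copy of $K_m$ and adding one copy of each of $K_t$ and $K_{m-t}$; meanwhile $e_{\ell+1,i} = e_{\ell,i} - t(m-t)$ and the other $e_{\ell+1,j}$ are unchanged. So the left-hand side changes by
\[\binom{t}{2} + \binom{m-t}{2} - \binom{m}{2} = -t(m-t),\]
while the right-hand side changes by $e_{\ell+1,i} - e_{\ell,i} = -t(m-t)$. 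Since both sides decrease by the same amount and they were equal at stage $\ell$, they remain equal at stage $\ell+1$. This completes the induction.

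There is no real obstacle here; the computation $\binom{t}{2} + \binom{m-t}{2} + t(m-t) = \binom{m}{2}$ is just counting the edges of $K_m$ according to whether they lie inside $K_t$, inside $K_{m-t}$, or across the partition, which is exactly the edge-count that one standard colouring step consumes from colour $i$. The only thing to be careful about is treating $S_\ell$ as a multiset so that removing "one copy" of $K_m$ is well-defined when several parts happen to have the same size, but this is already built into the algorithm's definition.
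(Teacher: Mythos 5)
Your proof is correct and is essentially identical to the paper's: both proceed by induction on $\ell$, using the $n$-good condition for the base case and the identity $\binom{t}{2}+\binom{m-t}{2}-\binom{m}{2}=-t(m-t)$ to show both sides decrease by the same amount in each colouring step. No issues.
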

\begin{proof}
We use induction on $\ell$. The case $\ell=0$ follows from the assumption that $(e_1,\cdots,e_k)=(e_{0,1},\cdots,e_{0,k})$ is $n$-good. Suppose this is true for some $\ell\geq0$, and without loss of generality assume that the $(\ell+1)$-th standard colouring step is splitting $K_{m_1}$ into $K_t$ and $K_{m_1-t}$ and colouring the $t(m_1-t)$ edges with colour $1$. Then we have 
\begin{align*}
\binom{t}{2}+\binom{m_1-t}{2}+\sum_{j=2}^r\binom{m_j}{2}&=\binom{t}{2}+\binom{m_1-t}{2}-\binom{m_1}{2}+\sum_{j=1}^ke_{\ell,j}\\
&=-t(m_1-t)+e_{\ell,1}+\sum_{j=2}^ke_{\ell,j}\\
&=\sum_{j=1}^ke_{\ell+1,j},
\end{align*}
as required.
\end{proof}

This relation can also be proved by noting that edges that remained to be coloured after $\ell$ iterations are exactly those within the complete graphs in $S_\ell$.

Using this, we can prove that a simple colouring step can always be performed during the algorithm under a mild size condition. 
\begin{lemma}\label{remove1}
For every $\ell\geq 0$, if $S_\ell=\{K_{m_1},\cdots,K_{m_r}\}$ and $m_1\geq 2k$, then a simple colouring step can be performed on $K_{m_1}$.
\end{lemma}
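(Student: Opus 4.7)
The plan is to show that some colour $i$ has at least $m_1 - 1$ uncoloured edges remaining, i.e.\ $e_{\ell, i} \geq m_1 - 1$; this is exactly the condition needed to perform a simple colouring step on $K_{m_1}$ (splitting off one vertex and colouring its $m_1 - 1$ edges to the rest with colour $i$).

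First, I would apply Lemma \ref{relation} to get
\[
\sum_{j=1}^{k} e_{\ell, j} \;=\; \sum_{j=1}^{r} \binom{m_j}{2} \;\geq\; \binom{m_1}{2}.
\]
Then by averaging over the $k$ colours, there exists some $i \in [k]$ with
\[
e_{\ell, i} \;\geq\; \frac{1}{k}\binom{m_1}{2} \;=\; \frac{m_1(m_1-1)}{2k}.
\]

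Finally, I would invoke the hypothesis $m_1 \geq 2k$ to conclude $\frac{m_1(m_1-1)}{2k} \geq m_1 - 1$, so that $e_{\ell,i} \geq m_1 - 1 = 1\cdot (m_1 - 1)$. This verifies the size condition $t(m-t) \leq e_{\ell,i}$ with $m = m_1$ and $t = 1$, so a simple colouring step on $K_{m_1}$ using colour $i$ is legal. There is no real obstacle here — the argument is a one-line pigeonhole on top of Lemma \ref{relation}, and the factor-of-$2k$ hypothesis is tight exactly to make the averaging give $m_1 - 1$.
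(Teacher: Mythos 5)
Your proposal is correct and is essentially identical to the paper's proof: both apply Lemma~\ref{relation}, average over the $k$ colours to find $i$ with $e_{\ell,i}\geq\frac{1}{k}\binom{m_1}{2}$, and use $m_1\geq 2k$ to conclude $e_{\ell,i}\geq m_1-1$. No issues.
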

\begin{proof}
By Lemma \ref{relation}, we have $$\sum_{j=1}^ke_{\ell,j}=\sum_{j=1}^r\binom{m_j}{2}\geq\binom{m_1}{2}.$$ So there exists $i\in[k]$ such that $e_{\ell,i}\geq\frac1{k}\binom{m_1}{2}\geq m_1-1$. Hence, we can perform a simple colouring step on $K_{m_1}$ using colour $i$.
\end{proof}

To obtain result analogous to Lemma \ref{remove1} below size $2k$, we need the following important concept of cushion.
\begin{definition}
For $\ell\geq 0$, if $S_\ell=\{K_{m_1},\cdots,K_{m_r}\}$, then we say the cushion we have for $K_{m_1}$ is $\sum_{j=1}^ke_{\ell,j}-\binom{m_1}{2}=\sum_{j=2}^r\binom{m_j}{2}$, and we say that this cushion is provided by $K_{m_2},\cdots,K_{m_r}$.
\end{definition}

\begin{lemma}\label{2kspare}
For all $m\geq 2$, suppose $K_m\in S_{\ell}$ and the cushion we have for $K_m$ is at least $\min\{\frac12(k^2-k),km\}$. Then $m-1$ consecutive simple colouring steps can be performed on $K_m$ to produce a simple colouring.
\end{lemma}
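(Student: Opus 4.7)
The plan is to perform the $m-1$ simple colouring steps one after another on the shrinking complete graph, verifying at each stage that a colour with sufficient remaining budget still exists.

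The key invariant I would establish first is that \emph{the cushion stays equal to $c$ throughout the process}. Using the alternative characterisation from Lemma~\ref{relation}, the cushion for the current graph equals $\sum_{j\geq 2}\binom{m_j}{2}$, the sum over the other components. A simple colouring step on the current $K_r$ replaces it by $K_{r-1}$ in the active slot while appending a new $K_1$ to the multiset; since $\binom{1}{2}=0$, the sum over other components is unchanged, so the cushion for the new $K_{r-1}$ is again $c$. (Equivalently, one can verify directly that $\binom{r}{2}$ and $\sum_j e_{\ell',j}$ both decrease by $r-1$ in a simple step.)

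Now at the step where the current graph is $K_r$ (for $r$ running from $m$ down to $2$), Lemma~\ref{relation} gives $\sum_{j=1}^{k} e_{\ell',j} = \binom{r}{2} + c$ for the corresponding index $\ell'$. To perform a simple step we need some colour $i$ with $e_{\ell',i} \geq r-1$. If no such colour existed, every colour would have budget at most $r-2$, forcing $\binom{r}{2} + c \leq k(r-2)$, i.e.\ $c \leq f(r) := k(r-2) - \binom{r}{2}$. It thus suffices to show that the hypothesised lower bound $c \geq \min\{\tfrac{1}{2}(k^2-k), km\}$ strictly exceeds $f(r)$ for every $r \in \{2, \ldots, m\}$.

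The function $f$ is concave in $r$ with real maximum at $r = k + 1/2$, and I would split into two cases. If $m \geq k$, the integer maximum of $f$ over $\{2, \ldots, m\}$ is attained at $r \in \{k, k+1\}$ with common value $\tfrac{1}{2}k(k-3)$, which is smaller than $\tfrac{1}{2}(k^2-k)$ by exactly $k$. If $m < k$, then $f$ is still increasing on $\{2, \ldots, m\}$, so $f(r) \leq f(m) = km - 2k - \binom{m}{2} < km$. Either way the hypothesised lower bound on $c$ strictly dominates $\max_r f(r)$, so the simple step can be performed, and iterating $m-1$ times yields the desired simple colouring. The main point of delicacy is precisely this two-case algebra, which explains why the stated bound takes the form of a minimum; the rest of the argument is clean bookkeeping built on Lemma~\ref{relation} and the cushion invariance.
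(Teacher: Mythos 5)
Your proof is correct and is essentially the same argument as the paper's: both reduce to showing that after colouring down to the current $K_r$, the total remaining budget $\binom{r}{2}+c$ exceeds $k(r-2)$, so pigeonhole yields a colour with at least $r-1$ edges left. The only cosmetic difference is that you phrase this as maximising the concave quadratic $f(r)=k(r-2)-\binom{r}{2}$ (splitting on $m\geq k$ versus $m<k$), whereas the paper completes the square directly and splits on which term of the minimum bounds the cushion; the underlying algebra is identical.
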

\begin{proof}
It suffices to show that for each $0\leq t\leq m-2$, it is possible to perform a simple colouring step on $K_{m-t}$. Indeed, performing simple colouring steps on $K_{m},\cdots,K_{m-t+1}$ colours $\sum_{j=1}^{t}(m-j)=\frac12t(2m-t-1)$ edges in total. 

If the cushion we have is at least $\frac12(k^2-k)$, then 
\begin{align*}
\sum_{j=1}^ke_{\ell+t,j}&=\sum_{j=1}^ke_{\ell,j}-\frac12t(2m-t-1)\\
&\geq\binom{m}{2}+\frac12(k^2-k)-\frac12t(2m-t-1)\\
&=\frac12(m^2-m+k^2-k-2mt+t^2+t)\\
&=\frac12(t-m+k)(t-m+k+1)+k(m-t-1)\\
&\geq k(m-t-1).
\end{align*}

If instead the cushion we have is at least $km$, then 
\begin{align*}
\sum_{j=1}^ke_{\ell+t,j}&=\sum_{j=1}^ke_{\ell,j}-\frac12t(2m-t-1)\\
&\geq\binom{m}{2}+km-\frac12t(2m-t-1)\\
&=\binom{m-t}{2}+km\\
&\geq km\geq k(m-t-1).
\end{align*}
Hence, in both cases there exists a colour $i$ with $e_{\ell+t,i}\geq m-t-1$. So we can perform a simple colouring step on $K_{m-t}$ using colour $i$, as required.
\end{proof}
We remark that a weaker version of this lemma in the case when $m=2k$ was proved by Feffer, Fu and Yan in \cite{FFY} using a much more complicated method.

We prove one final lemma before proving Theorem \ref{degen2}. This lemma allow us to perform several consecutive standard colouring steps of the same size, which will be useful for creating cushion. 

\begin{lemma}\label{cr}
Let $n,m,t,k\geq 1$ and $e_1,\cdots,e_k\geq 0$ be integers satisfying $n>tm$ and $\sum_{i=1}^ke_i>tn(m+k)$. Then we can perform $m$ consecutive standard colouring steps of size $t$ on $K_n$.
\end{lemma}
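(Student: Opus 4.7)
The plan is to perform the $m$ standard colouring steps greedily and in sequence, always operating on the current largest complete graph in the partition. Write $K_{n_j}$ for this largest graph after the $j$-th step; by design $n_0 = n$ and $n_j = n - jt$. The $j$-th step will split $K_{n_{j-1}} = K_{n-(j-1)t}$ into $K_t$ and $K_{n-jt}$, colouring the $t(n - jt)$ cross-edges with some colour whose remaining quota is at least $t(n - jt)$.

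I will prove by induction on $j = 1, \ldots, m$ that at each stage such a colour can be found. The key bookkeeping: after the first $j-1$ steps exactly $\sum_{i=1}^{j-1} t(n - it)$ edges have been coloured, which is at most $(j-1)tn \leq (m-1)tn$. Therefore the total remaining colour budget $\sum_{\ell=1}^k e_{j-1,\ell}$ is strictly greater than $tn(m+k) - (m-1)tn = tn(k+1) > k \cdot tn$. By pigeonhole, some colour $\ell^*$ satisfies $e_{j-1,\ell^*} > tn \geq t(n-jt)$, which is exactly what is needed to legally use colour $\ell^*$ in the $j$-th step.

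The hypothesis $n > tm$ will be used to check the geometric side-condition $t \leq \lfloor (n-(j-1)t)/2 \rfloor$ at each stage, which ensures that each split of size $t$ is permitted by the definition of a standard colouring step. I do not anticipate a genuine obstacle here: the entire argument reduces to a short pigeonhole estimate combined with tracking the edge budget across iterations, so the only care required is in the arithmetic of the inductive step.
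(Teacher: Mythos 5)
Your argument is correct and is essentially the paper's: both rest on the observation that each step colours at most $tn$ edges of the chosen colour, the paper summing $\lfloor e_i/(tn)\rfloor$ over the colours while you apply pigeonhole to the total remaining budget before each step, which is the same counting estimate packaged inductively. One caveat on the point you deferred: the side condition $t\le\lfloor(n-(j-1)t)/2\rfloor$ does \emph{not} actually follow from $n>tm$ alone when $t\ge 2$ (e.g.\ $t=5$, $m=2$, $n=11$ fails it at $j=2$), but the paper's own proof silently skips this too, and it is harmless because splitting $K_M$ into $K_t$ and $K_{M-t}$ with $t>\lfloor M/2\rfloor$ is just a standard colouring step of size $M-t$ yielding the identical colouring.
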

\begin{proof}
For each $j\in[m]$, the $j$-th size $t$ standard colouring step splits $K_{n-(j-1)t}$ into $K_t$ and $K_{n-jt}$, and colour the $t(n-jt)$ edges between them using one of the $k$ colours. Since $t(n-jt)\leq tn$, colour $i$ can be used to perform a size $t$ standard colouring step at least $\floor{\frac{e_i}{tn}}$ times for each $i\in[k]$. Since $$\sum_{i=1}^k\floor{\frac{e_i}{tn}}\geq\sum_{i=1}^k\frac{e_i}{tn}-k>m+k-k=m,$$ it is possible to perform $m$ such steps. 
\end{proof}

We now sketch the upper bound proof in Theorem \ref{degen2}, which proceeds in three stages:

\begin{itemize}
\item Stage 1: We perform $k$ consecutive standard colouring steps of size $r=\Theta\left(\frac{k^{0.5}}{(\log k)^{0.5}}\right)$ using Lemma \ref{cr}. Let $\mathcal{R}$ be the set of $k$ complete graphs $K_r$ split off in these steps. These complete graphs in $\mathcal{R}$ will provide a small but still significant amount of cushion. 

\item Stage 2: We perform several standard colouring steps of larger size, so that the set $\mathcal{C}$ of complete graphs split off in this stage provide a big cushion of at least $\frac12k^2$.

\item Stage 3: By Lemma \ref{remove1}, we can reduce the size of the largest remaining complete graph down to $K_{2k}$. Using the cushion provided by $\mathcal{C}$ and Lemma \ref{2kspare}, we can find a simple colouring for this $K_{2k}$. Then, we use Lemma \ref{2kspare} again and the cushion provided by $\mathcal{R}$ to find simple colourings for each complete graph in $\mathcal{C}$. Finally, we show that the cushion provided by $\mathcal{R}$ is enough for us to find simple colourings for the complete graphs in $\mathcal{R}$ itself as well.


\end{itemize}

The idea of this proof is already present implicitly in the upper bound proof in \cite{FFY} but with $\mathcal{C}$ and $\mathcal{R}$ being the same set, which turns out to be less than optimal.

Finally, we can prove Theorem \ref{degen2}.

\begin{proof}[Proof of Theorem \ref{degen2}]

The lower bound follows directly from Corollary \ref{generallower}, so we just need to prove the upper bound. The proof proceeds in 3 stages. Assume $k$ is sufficiently large and set $$\beta=5000000,\text{ }n=\floor{\frac{\beta k^{1.5}}{(\log{k})^{0.5}}}.$$
Let $(e_1,\cdots,e_k)$ be any $n$-good sequence, and write $e=\binom{n}{2}$. Initialise the colouring algorithm by setting $e_{0,j}=e_j$ for all $j\in[k]$ and $S_0=\{K_n\}$. 

\vspace{1em}

\noindent\textbf{Stage 1: The Small Cushion Steps}

\vspace{1em}

Set $r=\floor{\frac{\beta k^{0.5}}{30(\log k)^{0.5}}}$. Since $$\sum_{j=1}^ke_{0,j}=e=\binom{n}{2}>\frac{\beta^2k^3}{3\log k}\geq10krn>rn(k+k),$$ we may perform $k$ consecutive standard colouring steps of size $r$ on $K_n$ using Lemma \ref{cr}. Let $\mathcal{R}$ be this collection of $k$ complete graphs of size $r$. Note that we have coloured at most $krn\leq\frac{\beta^2k^3}{30\log k}<0.1e$ edges. Therefore, at the end of this stage, we have $\sum_{j=1}^ke_{k,j}\geq0.9e$.

\vspace{1em}

\noindent\textbf{Stage 2: The Big Cushion Steps}

\vspace{1em}

Let $J_1=\left\{j\in[k]\mid e_{k,j}\geq\frac{\beta^2k^{2.25}}{(\log k)^{1.25}}\right\}$. Since \[|J_1|\frac{\beta^2k^{2.25}}{(\log k)^{1.25}}\leq\sum_{j\in J_1}e_{k,j}\leq e,\] it follows that $|J_1|\leq\frac12k^{0.75}(\log k)^{0.25}$. We split into two cases.

\vspace{1em}

\textbf{Case 1.} $\sum_{j\in J_1}e_{k,j}\geq0.1e$.

\vspace{1em}

Set $c=\floor{\frac{k^{0.75}}{(\log k)^{0.75}}}$. Since \[\sum_{j\in J_1}e_{k,j}\geq0.1e\geq\frac{\beta^2k^3}{30\log k}\geq cn\cdot\frac{\beta k^{0.75}(\log k)^{0.25}}{30}>cn(\ceil{k^{0.75}(\log k)^{0.25}}+|J_1|),\]
we can perform $\ceil{k^{0.75}(\log k)^{0.25}}$ consecutive simple colouring steps of size $c$ using only the colours in $J_1$ by Lemma \ref{cr}. Let $\mathcal{C}$ be this collection of $\ceil{k^{0.75}(\log k)^{0.25}}$ complete graphs of size $c$. Observe that complete graphs in $\mathcal{C}$ create a cushion of $$k^{0.75}(\log k)^{0.25}\binom{c}{2}\geq\frac{k^{2.25}}{3(\log k)^{1.25}}\gg k^2,$$
which is enough for us to apply Lemma \ref{2kspare} later on. At this point, the set of complete graphs yet to be coloured consists of the complete graphs in $\mathcal{R}$ coming from Stage 1, the complete graphs in $\mathcal{C}$ coming from Stage 2 and one other large complete graph of size $n-rk-c\ceil{k^{0.75}(\log k)^{0.25}}\gg2k$.

\vspace{1em}

\textbf{Case 2. }$\sum_{j\in J_1}e_{k,j}<0.1e$.

\vspace{1em}

Let $J_2=\left\{j\in[k]\mid e_{k,j}\leq\frac{\beta^2k^2}{30\log k}\right\}$, then $\sum_{j\in J_2}e_{k,j}\leq\frac{\beta^2k^3}{30\log k}\leq0.1e$. Let $J_3=[k]\setminus(J_1\cup J_2)$. Without loss of generality, assume $J_3=[\ell]$. Then $\frac{\beta^2k^2}{30\log k}<e_{k,j}<\frac{\beta^2k^{2.25}}{(\log k)^{1.25}}$ for all $j\in J_3=[\ell]$ and \[\sum_{j=1}^\ell e_{k,j}=\sum_{j=1}^ke_{k,j}-\sum_{j\in J_1}e_{k,j}-\sum_{j\in J_2}e_{k,j}\geq0.9e-0.1e-0.1e=0.7e.\]

For each $j\in J_1\cup J_2$, use colour $j$ to perform as many simple colouring steps as possible to the largest remaining complete graph. Suppose a total of $N-k$ simple colouring steps are performed. Then for each $j\in J_1\cup J_2$, we have $e_{N,j}<n$, as otherwise we could perform another simple colouring step using colour $j$. Let $K_{x_0}$ be the largest complete graph in $S_{N}$. Then $S_{N}$ consists of $K_{x_0}$, $k$ complete graphs $K_r$ coming from Stage 1, and many $K_1$ from the simple colouring steps above. Therefore, by Lemma \ref{relation}, we have $$\binom{x_0}{2}+k\binom{r}{2}=\sum_{j\in J_1\cup J_2}e_{N,j}+\sum_{j=1}^\ell e_{N,j}\geq\sum_{j=1}^\ell e_{N,j}=\sum_{j=1}^\ell e_{k,j}\geq0.7e.$$
Since $k\binom{r}{2}\ll e$, it follows that $0.5x_0^2\geq\binom{x_0}{2}\geq0.66e\geq0.32\frac{\beta^2k^3}{\log k}$, and thus $x_0\geq\frac{4\beta k^{1.5}}{5(\log k)^{0.5}}$.

Now consider the following process. 

\begin{itemize}
\item Start with the complete graph $K_{x_0}$. 

\item In the $j$-th step, if $x_{j-1}<\frac{2\sqrt{\beta}k^{1.25}}{(\log k)^{0.25}}$, we stop. Otherwise,  perform a standard colouring step on $K_{x_{j-1}}$ of maximum possible size $c_j<\frac12x_{j-1}$ to split $K_{x_{j-1}}$ into $K_{x_j}$ and $K_{c_j}$, where $x_j=x_{j-1}-c_j$.
\end{itemize}
The maximality of $c_j$ implies that 
\[c_jx_j\leq e_{N+j-1,j} \text{\quad and \quad} (c_j+1)(x_j-1)>e_{N+j-1,j}.\]
From this, it follows that $$e_{N+j,j}=e_{N+j-1,j}-c_jx_j<x_j-c_j-1\leq x_j-1< n,$$ 
and \[\frac{e_{N+j-1,j}}{2x_j}\leq\frac{e_{N+j-1,j}}{x_j-1}-1\leq c_j\leq\frac{e_{N+j-1,j}}{x_j}, \tag{$\spadesuit$}\]
where the first inequality holds because
\[e_{N+j-1,j}=e_{k,j}\ge \frac{\beta^2k^2}{30\log k}>2n\ge 2x_j. \]

Assume the process stop in step $\ell'$. We claim that $x_{\ell'}\le\frac{2\sqrt{\beta}k^{1.25}}{(\log k)^{0.25}}$. Indeed, this follows trivially if the process stops because $x_{j-1}<\frac{2\sqrt{\beta}k^{1.25}}{(\log k)^{0.25}}$ for some $j\in[\ell]$. If instead we performed all $\ell$ standard colouring steps described above, then at the end we have $$\sum_{j=1}^ke_{N+\ell,j}=\sum_{j\in J_1\cup J_2}e_{N,j}+\sum_{j=1}^\ell e_{N+j,j}< kn.$$
This implies that $x_{\ell}<\frac{2\sqrt{\beta}k^{1.25}}{(\log k)^{0.25}}$, as required, because otherwise \[\binom{x_{\ell}}{2}>kn\geq\sum_{j=1}^ke_{N+\ell,j},\]
contradicting Lemma \ref{relation}.

Since  $x_0\geq\frac{4\beta k^{1.5}}{5(\log k)^{0.5}}$, we have 
$\frac{x_0}{x_{\ell'}}\geq \frac{2\sqrt{\beta}k^{0.25}}{5(\log k)^{0.25}}$. Moreover, we have 
\[2k\ll\frac{\sqrt{\beta}k^{1.25}}{(\log k)^{0.25}}\leq \frac12x_{\ell'-1}\leq x_{\ell'}<\frac{2\sqrt{\beta}k^{1.25}}{(\log k)^{0.25}},\] 
It follows that for $j\in[\ell']$, we have 
\[c_j\leq\frac{e_{N+j-1,j}}{x_j}\leq\frac{\beta^2k^{2.25}}{(\log k)^{1.25}}\frac{(\log k)^{0.25}}{\sqrt{\beta}k^{1.25}}\leq\frac{\beta^{1.5}k}{\log k}. \tag{$\clubsuit$}\]

Let $\mathcal{C}$ be the collection $K_{c_1},\cdots,K_{c_{\ell'}}$ of small complete graphs split off in this process. The amount of cushions provided by $\mathcal{C}$ is at least

\begin{align*}
    \sum_{j=1}^{\ell'}\binom{c_j}{2}\geq\frac13\sum_{j=1}^{\ell'}c_j^2
    &\geq\frac13\sum_{j=1}^{\ell'}c_j\frac{e_{N+j-1,j}}{2x_j} && \text{by $(\spadesuit)$}\\
    &=\frac13\sum_{j=1}^{\ell'}c_j\frac{e_{k,j}}{2x_j}\\
    &\geq\frac{\beta^2k^2}{180\log k}\sum_{j=1}^{\ell'}\frac{c_j}{x_j} && \text{by }e_{k,j}\ge \frac{\beta^2k^2}{30\log k}\\
    &=\frac{\beta^2k^2}{180\log k}\sum_{j=1}^{\ell'}\int_{x=x_j}^{x_{j-1}}\frac{1}{x_j}\text{d}x &&\text{by }c_j=x_{j-1}-x_j\\
    &\geq\frac{\beta^2k^2}{180\log k}\sum_{j=1}^{\ell'}\int_{x=x_j}^{x_{j-1}}\frac{1}{x}\text{d}x\\
    &=\frac{\beta^2k^2}{180\log k}\int_{x=x_{\ell'}}^{x_0}\frac 1x\text{d}x\\
    &=\frac{\beta^2k^2}{180\log k}\log{\frac{x_0}{x_{\ell'}}}\\
    &\geq\frac{\beta^2k^2}{180\log k}\log\frac{2\sqrt{\beta}k^{0.25}}{5(\log k)^{0.25}}\\
    &\geq\frac{\beta^2k^2}{720\log k}(\log k-\log\log k)\gg k^2,
\end{align*}
which is enough for us to apply Lemma \ref{2kspare} later on. At this point, the set of complete graphs left to be coloured consists of those in $\mathcal{R}$, those in $\mathcal{C}$ and the complete graph $K_{x_{\ell'}}$ with $x_{\ell'}\gg2k$. 

\vspace{1em}

\noindent\textbf{Stage 3: Colour the Remaining Complete Graphs}

\vspace{1em}

In both cases, there is only one complete graph left with size at least $2k$. We can first use Lemma \ref{remove1} to perform as many simple colouring steps on it as needed to reduce its size down to $2k$. Then we can use Lemma \ref{2kspare} to find a simple colouring for it, because the complete graphs in $\mathcal{C}$ provide a cushion of at least $\frac12k^2$.

It remains to colour the complete graphs in $\mathcal{C}$ and $\mathcal{R}$. First we find a simple colouring for each of those in $\mathcal{C}$. Recall that, for every $K_s\in \mathcal{C}$, 
\begin{itemize}
\item If we are in Case 1, then $s=c=\floor{\frac{k^{0.75}}{(\log k)^{0.75}}}$,
\item If we are in Case 2, then $s=c_j\leq\frac{\beta^{1.5}k}{\log k}$ for some $j\in[\ell']$ by $(\clubsuit)$.
\end{itemize}
Note that the $k$ complete graphs of size $r$ in $\mathcal{R}$ together provide a cushion of at least \[k\binom{r}{2}\geq\frac{\beta^2k^2}{2000\log k}\geq ks\] for $K_s$. Hence, by Lemma \ref{2kspare} we can find a simple colouring for $K_s$.

Finally, we prove that we can colour all $k$ complete graphs of size $r$ in $\mathcal{R}$. We do this by showing that it is always possible to perform a simple colouring step on the largest remaining complete graphs in $\mathcal{R}$. Indeed, suppose the largest remaining complete graph has size $2\leq t\leq r$, then the other $k-1$ complete graphs all have size at least $t-1$. Hence by Lemma \ref{relation}, we have \[\sum_{j=1}^ke_{L,j}\geq(k-1)\binom{t-1}{2}+\binom{t}{2}\geq(k-1)(t-2)+(t-1)=k(t-2)+1.\]
So there exists some $i\in[k]$ with $e_{L,i}\geq t-1$, which we can use to perform a simple colouring step on $K_t$. As this is true for every $t$, we can repeat and colour all complete graphs in $\mathcal{R}$. This completes a standard colouring of $K_n$ with colour distribution sequence $(e_1,\cdots,e_k)$. By Lemma \ref{standardhfree}, this colouring is rainbow $H$-free, which completes the proof.
\end{proof}

\section{Triangle}\label{t}
In this section, we resolve the question posed by Gy\'arf\'as et al. in \cite{GPPW} by proving Theorem \ref{triangle}. Recall that rainbow $K_3$-free colourings are also known as Gallai colourings, and $g(K_3,k)=g(k)$. As we have proved the upper bound in Section \ref{upper}, it suffices to prove the lower bound here. Note that Theorem \ref{triangle} also shows that the upper bound in Theorem \ref{degen2} is tight up to the choice of constant in the case when $H=K_3$. 

We will use the following fundamental decomposition theorem of Gallai colourings.
\begin{theorem}[\cite{FFY}]\label{strongdecomp}
Suppose we have a Gallai $k$-colouring of $K_n$. Then there exist at most two colours, which we call base colours, and a decomposition of $K_n$ into $m\ge 2$ vertex disjoint complete graphs $K_{n_1},\cdots,K_{n_m}$, such that for each $1\leq i<j\leq m$, there exists a base colour with all edges between $V(K_{n_i})$ and $V(K_{n_j})$ having this colour. Moreover, each base colour is used to colour at least $n-1$ edges between these smaller complete graphs. 
\end{theorem}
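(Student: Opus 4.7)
The plan is to derive Theorem~\ref{strongdecomp} from the classical Gallai decomposition (due to Gy\'arf\'as and Simonyi) by a minimality argument combined with a weighted spanning-tree inequality. The classical theorem guarantees the existence of a partition $V(K_n) = V_1 \cup \cdots \cup V_m$ with $m \geq 2$ such that every edge between distinct parts $V_i$ and $V_j$ has the same colour, and at most two colours — the base colours — appear across all inter-part pairs. I would choose such a partition so that $m$ is as small as possible. Write $n_i = |V_i|$ and let $R$ denote the reduced edge-coloured $K_m$ on vertex set $[m]$, whose edge $\{i,j\}$ inherits the common colour between $V_i$ and $V_j$.

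The key claim is that with this minimal choice of $m$, each base colour appearing in $R$ forms a connected spanning subgraph of $R$. Indeed, suppose some base colour $c$ induced a disconnected subgraph of $R$, with connected components $U_1, \ldots, U_{m'}$ where $m' \geq 2$. Merging each $U_r$ into a super-part $\widehat V_r = \bigcup_{i \in U_r} V_i$ produces a coarser partition of $V(K_n)$ in which all edges between distinct super-parts have the \emph{other} base colour, because no $c$-edge in $R$ can cross between distinct components of the $c$-subgraph. This yields a valid Gallai partition with $m' < m$ parts, contradicting minimality. (If $c$ has no edges in $R$ at all, then $c$ is not genuinely a base colour, and the claim holds vacuously.)

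For each base colour $a$ one can then pick a spanning tree $T_a$ inside the $a$-coloured subgraph of $R$. Each edge $(i,j) \in T_a$ accounts for $n_i n_j$ edges coloured $a$ in $K_n$, so the number of edges coloured $a$ between parts is at least $\sum_{(i,j) \in T_a} n_i n_j$. The theorem thus reduces to the weighted spanning-tree inequality
\[\sum_{(i,j) \in T} n_i n_j \geq n - 1\]
for any tree $T$ on $[m]$ with positive integer weights $n_1, \ldots, n_m$ summing to $n$. I would prove this by induction on $m$: the base case $m = 2$ gives $n_1 n_2 \geq 1 \cdot (n-1)$ since both factors are at least $1$; for $m \geq 3$, removing a leaf $v$ with neighbour $p$, the inductive hypothesis applied to $T - v$ with total weight $n - n_v$ supplies a contribution of at least $(n - n_v) - 1$, and the edge $(v, p)$ contributes an additional $n_v n_p \geq n_v$, giving $\geq n-1$ in total.

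The main conceptual hurdle is recognising the correct strengthening of the classical decomposition, namely forcing both base-colour classes in the reduced graph to be connected by means of the minimality argument in the second paragraph; one must carefully verify that the coarsening obtained from disconnected components is itself a legitimate Gallai partition. Once this is in place, the $n-1$ lower bound follows immediately from the weighted spanning-tree inequality, which is itself a short induction.
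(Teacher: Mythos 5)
Your argument is correct and complete. Note that the paper itself does not prove Theorem~\ref{strongdecomp}; it imports it verbatim from \cite{FFY}, so there is no in-paper proof to compare against. Your derivation --- take a Gallai partition with the number of parts $m$ minimal, observe that merging the components of a disconnected base-colour class in the reduced graph $R$ yields a coarser legitimate Gallai partition (all cross-edges between merged super-parts carry the other base colour, and $2\le m'<m$ whenever the colour actually has an edge in $R$), conclude that every base colour appearing in $R$ spans a connected subgraph, and then apply the weighted spanning-tree bound $\sum_{\{i,j\}\in T}n_in_j\ge n-1$ --- is a clean, self-contained route to the statement. The only points worth making explicit in a write-up are the two edge cases you already handle implicitly: a base colour with no edges in $R$ need not be declared a base colour at all, and an isolated vertex of a colour class counts as its own component, so minimality forces the class to be spanning as well as connected. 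The spanning-tree inequality's base case is most transparently justified by $n_1n_2-(n_1+n_2-1)=(n_1-1)(n_2-1)\ge 0$.
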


Now we can prove Theorem \ref{triangle}.

\begin{proof}[Proof of Theorem \ref{triangle}] By Theorem \ref{degen2}, we only need to show  $g(H,k)=\Omega(k^{1.5}/(\log k)^{0.5}).$ The proof consists of 3 steps.

\vspace{1em}

\noindent{\bf Step 1: Construct the sequence and describe a splitting process}

\vspace{1em}

Consider the colour distribution sequence $(e_1,\cdots,e_k)$ on $n$ vertices given by $e_1=\cdots=e_{c}=a+1$, $e_{c+1}=\cdots=e_{\ceil{\frac{k}{2}}}=a$ and $e_{\ceil{\frac{k}{2}}+1}=\cdots=e_{k}=b$, where
$$\alpha=\frac1{10},\text{ } n=\floor{\frac{\alpha k^{1.5}}{(\log k)^{0.5}}},\text{ } b=\floor{\frac{k}{2}},\text{ } a=\floor{\frac{\binom{n}{2}-b\floor{\frac{k}{2}}}{\ceil{\frac{k}{2}}}},\text{ } c=\binom{n}{2}-b\floor{\frac{k}{2}}-a\ceil{\frac{k}{2}}.$$ 
It is easy to verify that $(e_1,e_2,\cdots,e_k)$ is an $n$-good sequence. We will show that for sufficiently large $k$, there is no Gallai colouring of $K_n$ with colour distribution sequence $(e_1,\cdots,e_k)$. 

Suppose for a contradiction that there is such a Gallai colouring, then we can perform the following stepwise splitting process by repeatedly applying Theorem \ref{strongdecomp}:
\begin{itemize}
\item Initialise with $K_{x_0}=K_n$.

\item For each $i\geq0$, if $x_i\le b+1$, we stop. Otherwise, by Theorem \ref{strongdecomp}, we can find a decomposition of $K_{x_i}$ using at most two base colours into $m\ge 2$ vertex disjoint complete graphs $K_{y_1},\cdots,K_{y_m}$, where $y_1\geq\cdots\geq y_m$.

\item Let $t_{i+1}=y_m$ and $x_{i+1}=x_i-t_{i+1}$. Split $K_{x_i}$ into $K_{y_m}$ and $K_{x_{i+1}}$, with the latter being the union of $K_{y_1},\cdots,K_{y_{m-1}}$. Note that by Theorem \ref{strongdecomp}, every edge between $V(K_{x_{i+1}})$ and $V(K_{y_m})$ is coloured with a base colour.

\item Repeat the second and third step.

\end{itemize}

Hence, we can find a nested sequence of complete graphs $K_{x_0}\supset K_{x_1}\supset\cdots\supset K_{x_\ell}$, where $n=x_0>x_1>\cdots>x_{\ell-1}\ge b+2>x_{\ell}$. Moreover, by the last part of Theorem \ref{strongdecomp}, in each iteration of the process above, the base colour chosen must be used to colour at least $b+1$ edges. Hence colours $\ceil{\frac k2}+1,\cdots,k$ will never be picked as base colours in this process.

\vspace{1em}

\noindent{\bf Step 2: Lower bound on the number of edges coloured with base colours}

\vspace{1em}

We claim that $t_{i+1}\leq\frac{4(a+1)}{x_{i}}$ for all $0\leq i\leq\ell-1$. Indeed, every edge between the complete graphs $V(K_{x_{i+1}})$ and $V(K_{y_m})$ must be coloured with a base colour. However, the at most 2 base colours together can be used to colour at most $2(a+1)$ edges. This implies that
\[\frac12x_it_{i+1}\leq(x_i-t_{i+1})t_{i+1}=x_{i+1}y_m\le 2(a+1),\]
where we used $t_{i+1}=y_m\leq\frac12x_i$. Thus, $t_{i+1}\leq\frac{4(a+1)}{x_i}$, as claimed.

We can now give a lower bound on the number of edges coloured with base colours in this splitting process. This number is
\begin{align*}
\sum_{i=1}^{\ell}t_i(x_{i-1}-t_i)&=\sum_{i=1}^{\ell}\int_{x=x_i}^{x_{i-1}}(x_{i-1}-t_i)\text{d}x\\
&\geq\sum_{i=1}^{\ell}\int_{x=x_i}^{x_{i-1}}\left(x_{i-1}-\frac{4(a+1)}{x_{i-1}}\right)\text{d}x\\
&\geq\sum_{i=1}^{\ell}\int_{x=x_i}^{x_{i-1}}\left(x-\frac{4(a+1)}{x}\right)\text{d}x\\
&=\int_{x=x_{\ell}}^{x_0}\left(x-\frac{4(a+1)}{x}\right)\text{d}x\\
&\geq\int_{x=b+1}^{n}\left(x-\frac{4(a+1)}{x}\right)\text{d}x\\
&=\frac{n^2-(b+1)^2}{2}-4(a+1)\log\frac{n}{b+1}\\
&\geq\frac{n^2}{2}-\frac{2b^2}{3}-4(a+1)\log\frac n{b}.
\end{align*}

Recall that only the first $\ceil{\frac{k}{2}}$ colours can be used as base colours in this process.

\vspace{1em}

\noindent{\bf Step 3: Not enough edges can be coloured with the first $\ceil{\frac k2}$ colours}

\vspace{1em}

The total number of edges that can be coloured with the first $\ceil{\frac{k}{2}}$ colours is
$$\sum_{i=1}^{\ceil{\frac k2}}e_i=\binom{n}{2}-b\floor{\frac k2}\leq\frac{n^2}{2}-b^2.$$
So we will have a contradiction if $$\frac{n^2}{2}-\frac{2b^2}{3}-4(a+1)\log\frac n{b}>\frac{n^2}{2}-b^2.$$

Note that for sufficiently large $k$ we have 
$$b^2=\floor{\frac{k}{2}}^2\geq\left(\frac{k-1}2\right)^2\geq\frac{k^2}{5},$$ 
$$4(a+1)\leq 5a\leq 5\frac{\binom{n}{2}}{\ceil{{\frac k2}}}\leq\frac{5n^2}{k}\leq\frac{5\alpha^2k^2}{\log k},$$ 
$$\log\frac nb\leq\log\left(\frac{\alpha k^{1.5}}{(\log k)^{0.5}}\frac{3}{k}\right)=\log\left(\frac{3\alpha k^{0.5}}{(\log k)^{0.5}}\right)=\frac12(\log k-\log\log k)+\log(3\alpha)\leq\log k.$$
Putting these together and using $\alpha=\frac1{10}$, we get $$\left(\frac{n^2}{2}-\frac{2b^2}{3}-4(a+1)\log\frac n{b}\right)-\left(\frac{n^2}{2}-b^2\right)=\frac{b^2}3-4(a+1)\log\frac nb\geq\frac{k^2}{15}-\frac{k^2}{20}>0,$$
as required. This proves that if $k$ is sufficiently large, then there is no Gallai colouring with colour distribution sequence $(e_1,\cdots,e_k)$. Therefore, $g(k)\geq\frac{\alpha k^{1.5}}{(\log k)^{0.5}}$. 
\end{proof}

\end{document}